\documentclass[12pt,leqno]{amsart}
\usepackage{hyperref,color}
\usepackage[T1]{fontenc}
\usepackage{amssymb}
\usepackage{amsthm,amsmath}
\setlength{\textwidth}{13.5cm}

\numberwithin{equation}{section}
\newtheorem{thm}{Theorem}
\newtheorem{prop}{Proposition}

\newtheorem{lem}{Lemma}
\newtheorem{exmp}{Example}
\newtheorem{rem}{Remark}

\newcommand{\X}{E}

\newcommand{\supp}{\mathrm{supp}}
\newcommand{\B}{\mathcal{B}}

\newcommand{\E}{\mathcal{E}}
\renewcommand{\Sigma}{\E}

\newcommand{\Lloc}{L^1_{\mathrm{loc}}}
\newcommand{\ACloc}{AC}

\newcommand{\rg}{\mathrm{Im}}

\newcommand{\SX}{L^1}
\newcommand{\SXP}{L^1_{\varphi}}

\newcommand{\citep}{\cite}

\begin{document}

\title[Dynamics and density evolution in growth processes]
{Dynamics and density evolution in piecewise deterministic
growth processes}
\author{Michael C. Mackey}
\address{Departments of Physiology, Physics \& Mathematics and Centre for Nonlinear
Dynamics, McGill University, 3655 Promenade Sir William Osler, Montreal, QC,
Canada, H3G 1Y6} \email{ mackey@cnd.mcgill.ca}
\author{Marta Tyran-Kami\'nska${}^\dagger$}
\address{Institute of
Mathematics, Polish Academy of Sciences, Bankowa 14, 40-007 Katowice, Poland}
\address{Institute of Mathematics,
University of Silesia, Bankowa 14, 40-007 Katowice, Poland}
\email{mtyran@us.edu.pl}
\thanks{${}^\dagger$Corresponding author}

\subjclass[2000]{Primary 47D06; Secondary 60J25}%
\keywords{first order partial differential equation, stochastic semigroup, asymptotic stability, invariant density}%

\dedicatory{Dedicated to the memory of Andrzej Lasota (1932-2006)}%

\begin{abstract}
A new sufficient condition is proved for the existence of stochastic semigroups
generated by the sum of two unbounded operators. It is applied to one-dimensional
piecewise deterministic Markov processes, where we also discuss the existence of a
unique stationary
density and give sufficient conditions for asymptotic stability.
\end{abstract}

\maketitle

\section{Introduction} \label{sec:intro}

The development of  cell cycle models to  account for the statistical properties
of division dynamics in populations of cells inevitably led to the consideration
of stochastically perturbed dynamical systems
\citep{diekmann84,almcm84,al-mcm-jt-92,tyrcha88,tyson86}. These applied
considerations have been followed by work on  the behaviour of Poisson driven
dynamical systems in a pure mathematical context \citep{lasota03,traple96}.  More
recently other areas of application related to the role of intrinsic (as opposed
to extrinsic) noise in gene regulatory dynamics
\citep{gillespie77,lipniacki06,rudnicki07} have made the understanding of
stochastic perturbations of dynamical systems of more than passing interest.

We were originally motivated by the work of Lasota et al.~\cite{al-mcm-jt-92} who
considered a (biological) system which produces `events' and has an internal or
physiological time in addition to the laboratory time $t$. We denote this internal
time by $\tau$ to distinguish from the time $t$. When an event appears the
physiological time $\tau=\tau_{e}$ is reset to $\tau=0$. We assume that the rate
${d\tau}/{dt}$ depends on the amount of an `activator' which we denote by $a$.
Thus we have
\begin{equation}\label{e:dephi}
\frac{d \tau}{dt}=\varphi(a),\quad \varphi\ge 0.
\end{equation}
The activator is produced by a dynamics described by the differential equation
\begin{equation}\label{e:de}
\frac{d a}{dt}=g(a),
\end{equation}
where $g\ge 0$ is a continuous function
on an open interval that may or may not be bounded. 
When an event is produced at a time $\tau_{e}$ and activator level $a_e$, then a
portion $\rho(a_e)$ of $a_e$ is consumed so the level of the activator after the
event is then
\begin{equation}\label{d:w}
\sigma(a_e):=a_e-\rho(a_e).
\end{equation}
We also allow  the possibility that the portion $\rho(a_e)$ depends on an
environmental or external factor so that  $\rho$ is a function of two variables
$\rho(a_e,\theta_e)$ where $\theta_e\in \Theta$ is distributed according to some
probability measure $\nu$ on $\Theta$. The solution of \eqref{e:de} with the
initial condition $a(0)=x$ will be denoted by
\[
a(t)=\pi_tx
\]
and we assume that it is defined for all $t\ge 0$. Then the solution of equation
\eqref{e:dephi} with the initial condition $\tau(0)=0$ is given by
\[
\tau(t)=\int_0^t\varphi(\pi_rx)dr.
\]
It is reasonable to require that also $\tau(t)$ is finite for all $t\ge 0$.

Lasota et al.~\cite{al-mcm-jt-92} studied the statistical behavior of a sequence
of such events occurring at random times
$$
0=t_0 < t_1 < \cdots < t_n < \cdots
$$
and denoted $a_n = a(t_n)$
to find 
\begin{equation}\label{eq:an}
a_{n+1} = T(a_n,\tau_n),\quad \text{where}\quad \tau_n = \int_{t_{n-1}}^{t_n}
\varphi(\pi_{s-{t_{n-1}}} a_{n-1}) ds
\end{equation}
were exponentially distributed independent random variables,  giving a relation
between successive activator levels at event occurrence and studying a discrete
time system with stochastic perturbations by the $\tau_n$. Here we have extended
these considerations to a continuous time situation by examining what happens at
all times $t$ and not merely what happens at $t_0,t_1,t_2,  \cdots$. Thus we
arrive at a continuous time piecewise deterministic Markov process, whose sample
paths between the jump times $t_0, t_1, \cdots$ are given by the solution of
\eqref{e:de} and at the jump times the state of the process is
selected according to a jump stochastic kernel, 
which is the transition probability function for~\eqref{d:w}. This leads us to
study evolution equations of the form
\begin{equation}\label{eq:evd2}
\dfrac {\partial u }{\partial t}=A_0u-\varphi u+P(\varphi u), \quad
\text{where}\quad  A_0u(x)=-\frac{d}{dx}(g(x)u(x)),
\end{equation}
on the space of integrable functions $L^1$, where $P$  is a stochastic operator on
$L^1$ corresponding to the jump stochastic kernel and $\varphi$  need not be
bounded. We supplement  \eqref{eq:evd2} with the initial condition $u(0)=u_0$
which is the density of the distribution of the initial amount of the activator.

Let us write
\begin{equation}\label{eq:opAC}
Au=A_0u-\varphi u\quad \text{and}\quad  \mathcal{C}u=Au+P(\varphi u).
\end{equation}
If $\varphi$ is unbounded then $\mathcal{C}$ is the sum of two unbounded
operators, so the existence and uniqueness of solutions to the Cauchy problem in
$\SX$ is problematic; \eqref{eq:evd2} may have multiple
solutions~\cite{banasiakarlotti06}. We make use of perturbation results for
positive semigroups on $L^1$-spaces which go back to~\cite{kato54} (see
Section~\ref{sec:Pert}),  from which it follows that the operator $\mathcal{C}$
has an extension $C$ generating a positive contraction semigroup $\{P(t)\}_{t\ge
0}$ provided that the operator $A$ is the infinitesimal generator of a positive
contraction semigroup on $\SX$ and $\mathcal{C}$ is defined on the domain of $A$.
In general, if the closure of $\mathcal{C}$ is the generator $C$ of
$\{P(t)\}_{t\ge 0}$ then the Cauchy problem is uniquely solved and $\{P(t)\}_{t\ge
0}$ is a stochastic semigroup. In Section~\ref{sec:Pert} we prove a new sufficient
condition for uniqueness and in Section~\ref{sec:Pt} we show that if the discrete
process has a strictly positive stationary density then uniqueness holds. This
simplifies the analysis of \eqref{eq:evd2} when compared with the approach in
\cite{banasiakarlotti06}, and allows us to investigate  both the uniqueness of
solutions and their asymptotic properties.


The outline of this paper is as follows.  We recall basic definitions and
fundamental theorems from the theory of stochastic operators and semigroups in
Section~\ref{ssec:SDAS} and perturbation results for positive semigroups on
$L^1$-spaces in Section~\ref{sec:Pert}, which closes with the proof of our main
general result (Theorem~\ref{thm:uniq}). In Section~\ref{sec:St} we prove that the
operators $A_0$ and $A$ defined on suitable domains are generators. In
Section~\ref{sec:Pt} we show the applicability of Theorem~\ref{thm:uniq} to
equation~\eqref{eq:evd2} when $P$ is an arbitrary stochastic operator and also
give sufficient conditions for asymptotic stability. In Section
\ref{sec:exp-growth} we let the operator $P$ have a definite form that fits
directly into our framework,  and give several concrete examples drawn from work
on the regulation of the cell cycle as well as classical integro-differential
equations. In Section~\ref{sec:other} we extend our results to the situation in
which there is degradation (as opposed to growth)  and illustrate their
applicability using models for the stochastic regulation of gene expression.

In a companion paper \cite{tyran07}, these and other  results are placed in the
general context of semigroup theory and probability theory with  applications to
piecewise deterministic Markov process without `active boundaries'. There we also
prove that when the semigroup $\{P(t)\}_{t\ge 0}$ is stochastic then
\eqref{eq:evd2} is the corresponding evolution equation for densities of such
processes.

\section{Stochastic operators and semigroups}
\label{ssec:SDAS}

Let $(\X,\Sigma,m)$ be a $\sigma$-finite measure space. We denote by $D$ the set
of all densities on $\X$, i.e.
$$
D=\{u\in \, \SX: \,\, u\ge 0,\,\, \|u\|=1\},
$$
where $\|\cdot\|$ is the norm in $\SX=L^1(\X,\Sigma,m)$. A linear operator
$P\colon \SX\to \SX$ such that $P(D)\subset D$ is called \emph{stochastic} or
\emph{Markov}~\cite{almcmbk94}.

Let $P\colon \SX\to \SX$ be a stochastic operator. A density
$u$ is said to be \emph{invariant} or \emph{stationary} for $P$
if $Pu=u$. We say that $P$ \textit{overlaps supports} if for
every $u, v\in D$ there is a positive integer $n\ge 1$ such
that
$$
m (\supp P^{n}u\cap \supp P^{n}v)>0,
$$
where the support of  $u\in \SX$ is defined up to a set of measure zero by the
formula $ \supp\, u = \{x\in \X: u(x)\neq 0\}. $ Note that if $P$ overlaps
supports then it can have at most one invariant density \cite[see the proof of
Corollary~1]{rudnicki95}.

Let $\mathcal{J}\colon \X\times \E\to[0,1]$ be a \emph{stochastic transition
kernel}, i.e. $\mathcal{J}(x,\cdot)$ is a probability measure for each $x\in \X$
and the function $x\mapsto\mathcal{J}(x, B)$ is measurable for each $B\in\E$,
and let $P$ be a stochastic operator on $\SX$. If 
\begin{equation}\label{eq:stkmo}
\int_\X \mathcal{J}(x,B)u(x)m(dx)=\int_B Pu(x)m(dx)\quad \text{for all } B\in \E,
u\in D(m),
\end{equation}
then $P$ is called the \emph{transition} operator corresponding to $\mathcal{J}$.

If  $\mathcal{J}(x,B)=1_{T^{-1}(B)}(x)$ for $x\in \X$, $B\in\E$, where $T\colon
\X\to \X$  is a \emph{nonsingular} measurable transformation, i.e.
$m(T^{-1}(B))=0$ for all $B\in\E$ such that $m(B)=0$, then there exists a unique
stochastic operator $P$ on $\SX$ satisfying \eqref{eq:stkmo} and $P$ defined by
\eqref{eq:stkmo} is called the \emph{Frobenius-Perron} operator corresponding to
$T$.

A stochastic operator $P$ on $\SX$ is called \emph{partially integral} or
\emph{partially kernel} if there exists a measurable function $p\colon \X\times
\X\to[0,\infty)$ such that
$$
\int_\X\int_\X p(x,y)\,m(dy)\, m(dx) >0 \quad\text{and}\quad P u(x) \ge \int_\X
p(x,y)u(y)\, m(dy)
$$
for every density $u$. If, additionally,
\[
\int_\X p(x,y)\,m(dx)=1,\quad  y\in \X,
\]
then $P$ corresponds to the stochastic kernel
\[
\mathcal{J}(x,B)=\int_{B}p(y,x)\,m(dy),\quad x\in \X, B\in \E\] and we simply say
that $P$ has \emph{kernel} $p$.

A strongly continuous semigroup $\{P(t)\}_{t\ge 0}$ on $\SX$ is called a
\emph{stochastic semigroup} or \emph{Markov semigroup}  if  $P(t)$ is a stochastic
operator for all $t\ge 0$. A density $u_*$ is called \emph{invariant} or
\emph{stationary} for $\{P(t)\}_{t\ge 0}$ if $P(t) u_*=u_*$ for every $t\ge 0$.

A stochastic semigroup $\{P(t)\}_{t\ge 0}$ is called \emph{asymptotically stable}
if there is a stationary density $u_*$  such that
$$
\lim_{t\to\infty}\|P(t) u-u_*\|=0\;\;\text{for}\;\;u\in D
$$
and it is called \emph{partially integral} if, for some
$t_0>0$, the operator $P(t_0)$ is partially integral.

\begin{thm}[\citep{pichorrudnicki00}]\label{thm:pr00}
Let  $\{P(t)\}_{t\ge 0}$ be a partially integral stochastic semigroup. Assume that
the semigroup $\{P(t)\}_{t\ge 0}$ has only one invariant density $u_*$. If $u_*>0$
a.e. then the semigroup $\{P(t)\}_{t\ge 0}$ is asymptotically stable.
\end{thm}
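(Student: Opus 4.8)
The plan is to reduce the assertion to the lower-function criterion for asymptotic stability of stochastic semigroups (the semigroup analogue of Lasota's lower-bound theorem, cf. \cite{almcmbk94}): if there exists $h\in\SX$ with $h\ge0$, $\|h\|>0$, and
\[
\lim_{t\to\infty}\|(P(t)u-h)^-\|=0\qquad\text{for every }u\in D,
\]
then $\{P(t)\}_{t\ge0}$ has a unique invariant density and is asymptotically stable. Since the hypotheses already supply a unique invariant density $u_*$, the limit produced by the criterion must coincide with $u_*$; thus everything comes down to exhibiting a single nontrivial lower function $h$.

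To build $h$ I would use partial integrality. Fix $t_0>0$ and a measurable $p\colon\X\times\X\to[0,\infty)$ with $\int_\X\int_\X p(x,y)\,m(dy)\,m(dx)>0$ and $P(t_0)v(x)\ge\int_\X p(x,y)v(y)\,m(dy)$ for every $v\in D$. Because the double integral of $p$ is positive, there are sets $A,B\in\E$ of positive finite measure and a constant $\varepsilon>0$ with $p(x,y)\ge\varepsilon$ for $x\in A$, $y\in B$; hence
\[
P(t_0)v\ge\varepsilon\Big(\int_B v(y)\,m(dy)\Big)1_A\qquad\text{for every }v\in D.
\]
Consequently, whenever a density $v$ carries mass at least $\delta$ on $B$ (for a fixed $\delta>0$), the function $P(t_0)v$ dominates the fixed function $h:=\varepsilon\delta\,1_A$. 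This singles out $h$ as the candidate lower function and reduces the whole problem to one recurrence statement: for every initial density $u$, the orbit $P(t)u$ must eventually and persistently place mass at least $\delta$ on $B$. Indeed, $P(t_0+s)u=P(t_0)(P(s)u)\ge\varepsilon\big(\int_B P(s)u\,m(dx)\big)1_A$, so such persistence yields $\|(P(t)u-h)^-\|\to0$ at once.

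This persistence is where the two remaining hypotheses enter. Since $u_*>0$ a.e., the finite invariant measure $u_*\,m(dx)$ is equivalent to $m$, which makes the semigroup conservative (it has no dissipative part), so mass carrying invariant weight cannot simply vanish; and $\int_B u_*\,m(dx)>0$ shows $B$ carries such weight. The competing scenario to be excluded is \emph{sweeping}, i.e. $\int_B P(t)u\,m(dx)\to0$: this is ruled out by invariance, for by $\sigma$-finiteness we may pick $X_0\in\E$ with $m(X_0)<\infty$ and $\int_{X_0}u_*\,m(dx)>\tfrac12$, whence $\int_{X_0}P(t)u_*\,m(dx)=\int_{X_0}u_*\,m(dx)>\tfrac12$ for all $t$, contradicting sweeping. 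Uniqueness of $u_*$ supplies the ergodicity that lets the recurrence to $B$ hold for \emph{every} starting density rather than only across a single invariant component. Feeding this recurrence back into the minorization of the previous paragraph produces $h$, and the criterion of the first paragraph then gives asymptotic stability with limit $u_*$.

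The main obstacle is the recurrence statement itself --- upgrading the mere non-escape of mass into the uniform, persistent lower bound $\liminf_{t\to\infty}\int_B P(t)u\,m(dx)\ge\delta$, holding uniformly in $u\in D$. This is precisely a Foguel-type alternative for partially integral stochastic semigroups (asymptotic stability or sweeping), and its proof rests on a spreading-of-supports argument: the minorant $1_A$, once deposited, is transported by the semigroup and, because $u_*$ links all of $\X$ together, forces the supports of $P(t)u$ and of $u_*$ to overlap on sets of positive measure; iterating the minorization converts overlap into a genuine lower bound. The delicate point is making this quantitative and uniform in $u$ --- in particular excluding that $\int_B P(t)u\,m(dx)$ dips below $\delta$ along a subsequence --- after which the reductions in the earlier paragraphs are routine.
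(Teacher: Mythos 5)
First, a point of comparison: the paper does \emph{not} prove this statement --- Theorem~\ref{thm:pr00} is imported from \cite{pichorrudnicki00} and used as a black box --- so your proposal can only be judged on its own merits and against the known proof in that reference. Judged that way, it has two genuine gaps. The first is the minorization step: from $\int_\X\int_\X p(x,y)\,m(dy)\,m(dx)>0$ you cannot conclude that $p\ge\varepsilon$ on some rectangle $A\times B$ of positive measure. A set of positive --- even full --- measure in $\X\times\X$ need not contain any measurable rectangle of positive measure: take $\X=(0,1)$ and $p=\mathbf{1}_E$ with $E=\{(x,y):x-y\notin\mathbb{Q}\}$; then $\iint p=1$, yet by the Steinhaus theorem every rectangle $A\times B$ with $m(A)m(B)>0$ contains a point with $x-y$ rational, i.e.\ a point where $p=0$. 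This step can in principle be repaired (one works with the set $\{p\ge\varepsilon\}$ and its sections rather than with rectangles, as is done in \cite{rudnicki95,pichorrudnicki00}), but as written it is false.

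The second gap is fatal to the outline as it stands: the ``persistence/recurrence'' statement, which you yourself identify as the main obstacle, is justified only by conservativity (from $u_*>0$), non-sweeping of $u_*$, and ergodicity (from uniqueness of $u_*$). These three ingredients are not sufficient, because all of them are present in the following discrete-time counterexample: let $\X=\X_1\sqcup\X_2$ be two copies of a probability space and let $P$ map densities on $\X_1$ to densities on $\X_2$ through a strictly positive integral kernel, and vice versa. This operator is fully integral, conservative, and has a unique strictly positive invariant density, yet a density supported in $\X_1$ oscillates between the two halves forever, so for $B\subseteq\X_1$ the quantity $\int_B P^n u\,dm$ has $\liminf$ equal to zero and there is no asymptotic stability. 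Consequently, any correct proof must use the continuous-time semigroup structure in an essential way --- that is precisely what excludes such cyclic behaviour, and it is the heart of the argument in \cite{pichorrudnicki00}, which runs through the theory of Harris operators and shows that the cyclic decomposition must be trivial when the operator embeds in a strongly continuous semigroup. Your outline never uses continuity in $t$ anywhere, so the same outline would ``prove'' the false discrete-time statement; moreover, appealing to ``a Foguel-type alternative for partially integral stochastic semigroups'' at the crucial moment is essentially circular, since that alternative is equivalent in depth to the theorem being proved.
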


\section{Perturbation results in $\SX$}\label{sec:Pert}

Let $(A_0,\mathcal{D}(A_0))$ be the infinitesimal generator of a stochastic
semigroup, $\varphi\ge 0$ be a measurable function, and
\[
\SXP=\{u\in \SX:\int_\X\varphi(x)|u(x)|m(dx)<\infty\}.
\]
Let  $P$ be a stochastic operator on $\SX$ and let the operators $A$ and
$\mathcal{C}$, as given in \eqref{eq:opAC}, be defined on $\mathcal{D}(A)\subseteq
\mathcal{D}(A_0)\cap \SXP$. Assume that the operator $(A,\mathcal{D}(A))$ is the
infinitesimal generator of a positive strongly continuous contraction semigroup
$\{S(t)\}_{t\ge 0}$ on $\SX$. Then it is known
\cite{kato54,voigt87,banasiak01,banasiakarlotti06} that there exists a positive
strongly continuous contraction semigroup $\{P(t)\}_{t\ge 0}$ on $\SX$ satisfying
the following:
\begin{enumerate}
\item the infinitesimal generator $C$ of $\{P(t)\}_{t\ge 0}$ is an extension of the operator
$\mathcal{C}$, i.e. $\mathcal{D}(A)\subseteq\mathcal{D}(C)$ and $Cu=\mathcal{C}u$
for $u\in\mathcal{D}(A)$;
\item if $\{\bar{P}(t)\}_{t\ge 0}$ is another semigroup generated by an extension of $\mathcal{C}$
then $\bar{P}(t)u\ge P(t)u$ for all $u\in\SX$, $u\ge 0$, i.e.~$\{P(t)\}_{t\ge 0}$
is the minimal semigroup;
\item the generator $C$ is characterized by
\begin{equation}\label{eq:rp}
R(\lambda,C)u=\lim_{N\to\infty}R(\lambda,A)\sum_{n=0}^N (P(\varphi
R(\lambda,A)))^nu, \quad u\in \SX, \lambda>0,
\end{equation}
where $R(\lambda,\cdot)$ is the resolvent operator;
\item the semigroup $\{P(t)\}_{t\ge 0}$ satisfies the equation
\begin{equation}\label{eq:df}
P(t)u=S(t)u+\int_{0}^t P(t-s) P(\varphi S(s)u)\,ds, \quad u\in \mathcal{D}(A).
\end{equation}
\end{enumerate}

We can not conclude, in general, that the semigroup $\{P(t)\}_{t\ge 0}$ is
stochastic \cite[Example 4.3]{kato54}. 
Discussing various conditions for this to hold has
been a major objective of study \cite{kato54,voigt87,banasiak01,frosali04,
banasiakarlotti06} and leads to the following result.
\begin{thm}
\label{thm:pert} If for some $\lambda>0$
\begin{equation}\label{eq:css}
\lim_{n\to\infty}\|(P(\varphi R(\lambda,A)))^nu\|=0\quad\text{for all } u\in \SX
\end{equation}
then $\{P(t)\}_{t\ge 0}$ is a stochastic semigroup and its generator $C$ is the
closure of the operator $(\mathcal{C},\mathcal{D}(A))$.
\end{thm}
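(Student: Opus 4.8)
The plan is to extract both conclusions from the resolvent series~\eqref{eq:rp}. Write $K_\lambda=P(\varphi R(\lambda,A))$, so that for $u\ge 0$ the partial sums $S_Nu=\sum_{n=0}^N K_\lambda^n u$ are nonnegative (each of $R(\lambda,A)$, multiplication by $\varphi$, and $P$ is a positive operator) and $R(\lambda,A)S_Nu\to R(\lambda,C)u$ in $\SX$ by~\eqref{eq:rp}. There are two assertions to prove: that $\{P(t)\}_{t\ge0}$ is stochastic, and that $\mathcal{D}(A)$ is a core for $C$. Both will follow from a single mass-balance identity together with the hypothesis~\eqref{eq:css}.

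The engine of the proof is the identity
\[
\lambda\|R(\lambda,A)v\|=\|v\|-\|K_\lambda v\|,\qquad v\ge 0.
\]
To obtain it, set $w=R(\lambda,A)v\ge 0$, so that $w\in\mathcal{D}(A)\subseteq\mathcal{D}(A_0)\cap\SXP$ and $Aw=\lambda w-v$. Integrating and using $Aw=A_0w-\varphi w$ gives $\lambda\|w\|-\|v\|=\int_\X A_0w\,dm-\|\varphi w\|$. The crucial point is that $A_0$ is \emph{conservative}: since it generates a stochastic semigroup $\{T_0(t)\}_{t\ge0}$, the map $t\mapsto\int_\X T_0(t)w\,dm=\|w\|$ is constant for $w\ge 0$, whence $\int_\X A_0w\,dm=0$. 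Finally $\|\varphi w\|=\|P(\varphi w)\|=\|K_\lambda v\|$, because $\varphi w\ge 0$ and $P$ preserves the norm of nonnegative functions. I expect this identity---specifically the rigorous justification that $\int_\X A_0w\,dm=0$ for $w\in\mathcal{D}(A)$---to be the main obstacle, since it is where the structural assumptions on $A_0$ and the inclusion $\mathcal{D}(A)\subseteq\mathcal{D}(A_0)\cap\SXP$ are genuinely used.

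For stochasticity, I would apply the identity with $v=S_Nu$ for $u\ge0$. By positivity the norm is additive along the series, so $\|S_Nu\|=\sum_{n=0}^N\|K_\lambda^nu\|$ and $\|K_\lambda S_Nu\|=\sum_{n=1}^{N+1}\|K_\lambda^nu\|$; the identity then telescopes to
\[
\lambda\|R(\lambda,A)S_Nu\|=\|u\|-\|K_\lambda^{N+1}u\|.
\]
Letting $N\to\infty$ and invoking~\eqref{eq:css} yields $\lambda\|R(\lambda,C)u\|=\|u\|$ for every $u\ge 0$. Since $R(\lambda,C)u=\int_0^\infty e^{-\lambda t}P(t)u\,dt$ and $t\mapsto\int_\X P(t)u\,dm$ is continuous and bounded above by $\|u\|$ (contractivity), equality of its Laplace transform with $\|u\|/\lambda$ forces $\int_\X P(t)u\,dm=\|u\|$ for all $t$; hence $\{P(t)\}_{t\ge0}$ is stochastic, and a single value of $\lambda$ suffices.

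For the core statement, note that $C$ is closed (being a generator) and extends $\mathcal{C}$, so it remains to show $\mathcal{D}(A)$ is dense in $\mathcal{D}(C)$ for the graph norm. Given arbitrary $u\in\SX$, set $u_N=R(\lambda,A)S_Nu\in\mathcal{D}(A)$. A direct computation using $(\lambda-A)R(\lambda,A)=I$ and $K_\lambda S_N=S_{N+1}-I$ gives the telescoping relation $(\lambda-\mathcal{C})u_N=u-K_\lambda^{N+1}u$. By~\eqref{eq:css} (now for general $u\in\SX$) the right-hand side converges to $u$, while $u_N\to R(\lambda,C)u$. Since $\mathcal{C}u_N=Cu_N$ on $\mathcal{D}(A)$, we obtain $u_N\to R(\lambda,C)u$ together with $Cu_N\to\lambda R(\lambda,C)u-u=C\,R(\lambda,C)u$, i.e.\ convergence in the graph norm. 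As $u$ ranges over $\SX$, $R(\lambda,C)u$ ranges over all of $\mathcal{D}(C)$, so $\mathcal{D}(A)$ is a core for $C$ and $C=\overline{(\mathcal{C},\mathcal{D}(A))}$.
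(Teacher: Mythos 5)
Your proof is correct, but there is nothing in the paper to compare it against: the paper states this theorem \emph{without proof}, presenting it as a known consequence of the Kato--Voigt perturbation theory and citing Kato (1954), Voigt (1987), Banasiak (2001), Frosali--van der Mee--Mugelli (2004) and Banasiak--Arlotti (2006). What you have produced is, in essence, the standard argument from that literature, reconstructed correctly and self-contained modulo the quoted facts (1)--(4) of Section 3, which the theorem is entitled to use. Writing $K_\lambda=P(\varphi R(\lambda,A))$ as you do, your mass-balance identity $\lambda\|R(\lambda,A)v\|=\|v\|-\|K_\lambda v\|$ for $v\ge 0$ is exactly Kato's classical lemma, and your justification is sound: $w=R(\lambda,A)v$ lies in $\mathcal{D}(A)\subseteq\mathcal{D}(A_0)\cap L^1_\varphi$, so $A_0w$ and $\varphi w$ are separately integrable; $\int_\X A_0w\,dm=0$ follows by differentiating the constant function $t\mapsto\int_\X T_0(t)w\,dm$ at $t=0$ (the integration functional is bounded on $L^1$, hence commutes with the $L^1$-derivative); and $\|P(\varphi w)\|=\|\varphi w\|$ because $P$ is stochastic. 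The two places where a blind argument could have gone astray are handled properly: (i) you only have the identity at a single $\lambda$, and your Laplace-transform step correctly avoids needing more, since $t\mapsto\|P(t)u\|$ is continuous, dominated by $\|u\|$, and a nonnegative continuous function whose $e^{-\lambda t}$-weighted integral vanishes is identically zero; (ii) for the core statement you invoke hypothesis \eqref{eq:css} for arbitrary, not merely nonnegative, $u$ --- which is precisely what the hypothesis provides --- and the telescoping identity $(\lambda-\mathcal{C})R(\lambda,A)\sum_{n=0}^N K_\lambda^n u=u-K_\lambda^{N+1}u$ combined with \eqref{eq:rp} yields graph-norm density of $\mathcal{D}(A)$ in $\mathcal{D}(C)$, hence $C=\overline{(\mathcal{C},\mathcal{D}(A))}$. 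In short: correct, complete, and the same route the cited sources take.
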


We now prove our main result.
\begin{thm}\label{thm:uniq}
If for some $\lambda>0$ there is $v\in \SX$ such that $v>0$ a.e. and  $P(\varphi
R(\lambda,A))v\le v$, then condition \eqref{eq:css} holds.
\end{thm}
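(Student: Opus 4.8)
The plan is to set $K:=P(\varphi R(\lambda,A))$ and to exploit that $K$ is a positive operator on $\SX$ possessing the strictly positive supermedian element $v$, namely $Kv\le v$. The engine of the argument is a mass-balance identity:
\begin{equation*}
\|u\|=\lambda\|R(\lambda,A)u\|+\|Ku\|\qquad\text{for all }u\in\SX,\ u\ge 0.
\end{equation*}
To derive it, I first note that the stochastic semigroup $\{T_0(t)\}_{t\ge 0}$ generated by $A_0$ is conservative, so $t\mapsto\int_\X T_0(t)w\,m(dx)$ is constant and differentiating at $t=0$ gives $\int_\X A_0w\,m(dx)=0$ for $w\in\mathcal D(A_0)$. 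Taking $w=R(\lambda,A)u\in\mathcal D(A)\subseteq\mathcal D(A_0)\cap\SXP$ and using $Aw=A_0w-\varphi w=\lambda w-u$, integration yields $\int_\X u\,m(dx)=\lambda\int_\X w\,m(dx)+\int_\X\varphi w\,m(dx)$. For $u\ge 0$ positivity of the resolvent gives $w\ge 0$, and since $P$ is stochastic $\|Ku\|=\|P(\varphi w)\|=\|\varphi w\|$; the identity follows, and in particular $\|Ku\|\le\|u\|$.

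I would then study the orbit of $v$. Applying the positive operator $K$ to $Kv\le v$ repeatedly gives $K^{n+1}v\le K^nv\le v$, so $\{K^nv\}$ is an a.e.\ decreasing sequence of nonnegative functions dominated by $v\in\SX$; by dominated convergence it converges in $\SX$ to some $v_*$ with $0\le v_*\le v$. Since $K$ is bounded (hence continuous) on $\SX$, passing to the limit in $K^{n+1}v=K(K^nv)$ yields $Kv_*=v_*$. Feeding $v_*$ into the mass identity gives $\|v_*\|=\lambda\|R(\lambda,A)v_*\|+\|v_*\|$, so $R(\lambda,A)v_*=0$; as the resolvent is injective, $v_*=0$ and therefore $\|K^nv\|\to 0$.

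The last task is to transfer this from $v$ to an arbitrary $u$, and here the hypothesis $v>0$ a.e.\ is essential. For $u\ge 0$ and $c>0$ put $u_c=\min(u,cv)$, so that $u_c\le cv$ forces $\|K^nu_c\|\le c\|K^nv\|\to 0$, while $\|u-u_c\|=\int_\X(u-cv)^+\,m(dx)\to 0$ as $c\to\infty$ because $(u-cv)^+$ decreases a.e.\ to $0$ (using $v>0$ a.e.) under the integrable majorant $u$. Splitting $\|K^nu\|\le\|K^nu_c\|+\|K^n(u-u_c)\|\le\|K^nu_c\|+\|u-u_c\|$ and choosing $c$ large and then $n$ large shows $\|K^nu\|\to 0$; the signed case follows from $|K^nu|\le K^n|u|$. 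I expect the main obstacle to be exactly this transfer step: a strictly positive $v$ need not dominate a prescribed $u$, so one cannot simply write $u\le cv$, and the truncation estimate combined with the contractivity $\|K\|\le 1$ is what closes the gap. The derivation of the mass identity---the conservativity $\int_\X A_0w\,m(dx)=0$ and the norm bookkeeping via $P$ preserving integrals of nonnegative functions---is the other place that needs care.
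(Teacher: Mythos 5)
Your proof is correct, and its overall architecture is the same as the paper's: run the decreasing orbit $K^nv$ down to a fixed point, show the fixed point vanishes, then pass to arbitrary $u$ via truncation (the paper uses literally the same $u_k=\min\{u,kv\}$ together with contractivity of $K$). The one genuine difference is how the fixed point is killed. The paper outsources this step to \cite[Theorem 4.3]{banasiakarlotti06}, whereas you prove it directly: conservativity of the stochastic semigroup generated by $A_0$ gives $\int_\X A_0w\,m(dx)=0$ for $w\in\mathcal{D}(A_0)$, hence the mass-balance identity $\|u\|=\lambda\|R(\lambda,A)u\|+\|Ku\|$ for $0\le u\in\SX$ (valid because $\mathcal{D}(A)\subseteq\mathcal{D}(A_0)\cap\SXP$, the resolvent is positive, and the stochastic operator $P$ preserves integrals of nonnegative functions), and applying it to the limit $v_*$ forces $\lambda\|R(\lambda,A)v_*\|=0$, so $v_*=0$ by injectivity of the resolvent. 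This identity is precisely the Kato--Voigt bookkeeping that underlies the cited theorem, so in effect you have unpacked the citation; what your route buys is a self-contained, elementary argument that also establishes, rather than merely asserts, that $K$ is a contraction, while the paper's version is shorter because it leans on the established perturbation theory. Both arguments rest on the same standing hypotheses of Section~\ref{sec:Pert} (that $A_0$ generates a stochastic semigroup and that $A=A_0-\varphi$ on $\mathcal{D}(A)$), so neither is more general than the other.
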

\begin{proof}
We can assume that $v$ is a density.  Set $K_\lambda=P(\varphi R(\lambda,A))$. The
operator $K_\lambda$ is a positive contraction.  Since $K_\lambda v\le v$, the
sequence $K_\lambda^nv$ is strongly convergent in $\SX$ to a fixed point $u_*$ of
the operator $K_\lambda$. From \cite[Theorem
 4.3]{banasiakarlotti06} it follows that $u_*=0$. For any density $u$ we have
\[
0\le K_\lambda^n u_k\le k K_\lambda^n v, \quad \text{where}\quad u_k=\min\{u,k
v\},\; k\ge 1, n\ge1.
\]
Since $\|u_k-u\|\to 0$ as $k\to\infty$, this completes the proof of
\eqref{eq:css}.
\end{proof}

\section{The semigroup $\{S(t)\}_{t\ge 0}$}\label{sec:St}

Let $\X$ be an open interval in $\mathbb{R}$, bounded or unbounded,
$\Sigma=\B(\X)$ and $m$ be the Lebesgue measure. We shall denote by $\Lloc$  the
space of all Borel measurable functions on $\X$ which are integrable on compact
subsets of $\X$ and by $\ACloc$ the space of all absolutely continuous functions
on $\X$. We assume from now on that $\X=(d_0,d_1)$, where $-\infty\le d_0<d_1\le
\infty$, $g\colon \X\to \mathbb{R}$ is a continuous strictly positive function,
and $\varphi\in \Lloc$ is nonnegative. In this section we study the first order
differential operators $A_0$ and $A$ which are meaningful for any function $u\in
\Lloc$ for which $gu\in \ACloc$. We will define the operators $A_0$ and $A$ on
suitable domains $\mathcal{D}(A_0)$ and $\mathcal{D}(A)$ so that they are
generators of corresponding semigroups as described in Section~\ref{sec:Pert}.

Since $1/g,\varphi/g\in \Lloc$, we can define
\begin{equation}\label{eq:GQ}
G(x)=\int_{x_0}^x \frac{1}{g(z)}dz\quad \text{and }\quad
Q(x)=\int_{x_1}^x\frac{\varphi(z)}{g(z)}dz,
\end{equation}
where $x_0=d_0$ and $x_1=d_0$ when the integrals exist for all $x$ and, otherwise,
$x_0$, $x_1$ are any points in $\X$. The function $G$ is strictly monotonic,
continuously differentiable on $\X$, $G(d_0)\in\{0,-\infty\}$, and $G(d_1)$ is
either finite or equal to $+\infty$. The function $Q$ is monotonic with
$Q(d_0)\in\{0,-\infty\}$ and $Q(d_1)$ is either finite or equal to $+\infty$. $G$
is invertible with $G^{-1}$ well defined on $G(\X)$. If $G(\X)\neq \mathbb{R}$,
then we extend $G^{-1}$ continuously so that $G^{-1}(\mathbb{R}\setminus
G(\X))=\{d_0,d_1\}$. The formula
\begin{equation*}\label{d:rtx}
a(t,x)=G^{-1}(G(x)+t),\quad x\in \X, t\in\mathbb{R},
\end{equation*}
defines a  monotone continuous function in each variable with values in
$[-\infty,\infty]$ which is a solution of \eqref{e:de}. If $G(d_1)=+\infty$ then
$\pi_tx=a(t,x)$ is finite for all $t\ge 0$, $x\in \X$, and $\pi_t(\X)\subseteq
\X$, $t\ge 0$. In the case when $|G(d_0)|=G(d_1)=\infty$ the value $a(t,x)$ is
finite for all $t\in\mathbb{R}$ and $x\in \X$, so that we have, in fact, a flow
$\pi_t$ on $\X$ such that $\pi_t(\X)=\X$.

For $t>0$ we define the operators $P_0(t)$  and $S(t)$ on $\SX$ by
\begin{equation}\label{eq:dpo}
P_0(t)u(x)=\mathbf{1}_\X(\pi_{-t}x)u(\pi_{-t}x)\frac{g(\pi_{-t}x)}{g(x)},\;\;x\in
\X, 
u\in \SX
\end{equation}
and
\begin{equation}\label{eq:dst}
S(t)u(x)=e^{Q(\pi_{-t}x)-Q(x)}P_0(t)u(x),\quad  x\in \X, u\in \SX.
\end{equation}

\begin{thm}\label{thm:sP0t} If $G(d_1)=+\infty$ then $\{P_0(t)\}_{t\ge 0}$
is a stochastic semigroup 
and $\{S(t)\}_{t\ge 0}$ is a positive strongly continuous contraction semigroup on
$\SX$.
\end{thm}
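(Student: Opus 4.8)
The plan is to recognise $P_0(t)$ as the Frobenius--Perron operator of the flow map $\pi_t$ and then obtain every property of $S(t)$ from it via the bounded multiplier $M_t(x):=e^{Q(\pi_{-t}x)-Q(x)}$, so that $S(t)u=M_t\,P_0(t)u$. First I would record that the hypothesis $G(d_1)=+\infty$ is exactly what guarantees $\pi_tx$ is finite and $\pi_t(\X)\subseteq\X$ for all $t\ge0$, so the factor $g(\pi_{-t}x)/g(x)$ in \eqref{eq:dpo} is well defined and positive while the indicator $\mathbf{1}_\X(\pi_{-t}x)$ only switches off points $x$ whose backward orbit has already left $\X$ through the left endpoint. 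Differentiating $\pi_{-t}x=G^{-1}(G(x)-t)$ and using $(G^{-1})'=g\circ G^{-1}$ gives $\tfrac{d}{dx}\pi_{-t}x=g(\pi_{-t}x)/g(x)$, so $P_0(t)$ is precisely the change-of-variables (transfer) operator associated with the diffeomorphism $\pi_t$.

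The semigroup law $P_0(t)P_0(s)=P_0(t+s)$ I would verify by direct substitution: using $\pi_{-s}\circ\pi_{-t}=\pi_{-(s+t)}$ on the set where the indicators are active, the two Jacobian factors telescope to $g(\pi_{-(s+t)}x)/g(x)$, and the product of indicators collapses since $\pi_{-(s+t)}x\in\X$ forces $\pi_{-t}x=\pi_s(\pi_{-(s+t)}x)\in\pi_s(\X)\subseteq\X$, i.e.\ $\mathbf{1}_\X(\pi_{-(s+t)}x)\le\mathbf{1}_\X(\pi_{-t}x)$; this collapse is the one place where $\pi_t(\X)\subseteq\X$ is essential. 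Positivity of $P_0(t)$ is immediate, and to see that it is stochastic I would check $\|P_0(t)u\|=\|u\|$ for $u\ge0$ by the substitution $y=\pi_{-t}x$ (equivalently $x=\pi_ty$), under which the indicator restricts $x$ to $\pi_t(\X)$ while $y$ runs over all of $\X$, and the Jacobian cancels the factor $g(\pi_{-t}x)/g(x)$ exactly, leaving $\int_\X u(y)\,dy$.

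For strong continuity I would first prove $\|P_0(t)u-u\|\to0$ as $t\to0^+$ for $u\in C_c(\X)$: on $K=\supp u$ the point $\pi_{-t}x\to x$ uniformly, $g$ is bounded away from $0$ and $\infty$ there, and for small $t$ the integrand is supported in a fixed compact neighbourhood of $K$, so dominated convergence applies. Since $C_c(\X)$ is dense in $\SX$ and $\|P_0(t)\|\le1$, the standard three-term estimate extends strong continuity to all of $\SX$. This density-plus-contraction argument, together with the domination needed for the $C_c$ case, is the only genuinely technical point; everything else is bookkeeping of the indicator.

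Finally, for $\{S(t)\}_{t\ge0}$, since $g>0$ the orbits increase, so $\pi_{-t}x\le x$, and because $\varphi\ge0$ makes $Q$ nondecreasing we get $0<M_t(x)\le1$; hence $0\le S(t)u\le P_0(t)u$ for $u\ge0$, and using $|S(t)u|\le S(t)|u|$ for signed $u$ this makes $S(t)$ a positive contraction on $\SX$. The semigroup law for $S(t)$ I would obtain exactly as for $P_0(t)$, the extra exponential factors combining additively through $Q(\pi_{-t}x)-Q(x)+Q(\pi_{-(s+t)}x)-Q(\pi_{-t}x)=Q(\pi_{-(s+t)}x)-Q(x)$, so $S(t)S(s)=S(t+s)$. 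Strong continuity of $S(t)$ then follows from that of $P_0(t)$ together with $M_t(x)\to1$ as $t\to0$ (by continuity of $Q$ and of the flow), again reduced to $C_c(\X)$ by dominated convergence and extended by density and the contraction bound.
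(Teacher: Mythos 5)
Your proposal is correct and follows essentially the same route as the paper: stochasticity of $P_0(t)$ via the Frobenius--Perron interpretation, the contraction bound from $Q$ nondecreasing along increasing orbits, the indicator-collapse identity $\mathbf{1}_\X(\pi_{-t}x)\mathbf{1}_\X(\pi_{-s}(\pi_{-t}x))=\mathbf{1}_\X(\pi_{-s-t}x)$ for the semigroup law, and strong continuity by pointwise convergence on $C_c(\X)$ plus dominated convergence and density. The only (cosmetic) difference is direction: you treat $P_0(t)$ first and transfer its properties to $S(t)$ through the multiplier $M_t$, whereas the paper argues directly for $S(t)$ and recovers $\{P_0(t)\}_{t\ge 0}$ as the special case $\varphi\equiv 0$.
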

\begin{proof}
$P_0(t)$ is a stochastic operator because it is the Frobenius-Perron operator for
the transformation $x\mapsto
\pi_tx$. 
Since $Q$ is nondecreasing and $t\mapsto\pi_t x$ is increasing, we always have
\[
e^{Q(\pi_{-t}x)-Q(x)}\mathbf{1}_{\X}(\pi_{-t}x)\le \mathbf{1}_{\X}(\pi_{-t}x).
\]
Hence $S(t)$ is a positive contraction.
 To check the semigroup property observe that if
$x\in \X$ and $\pi_{-s-t}x\in \X$, then $\pi_s(\pi_{-s-t}x)\in \X$, by assumption,
and thus $\pi_s(\pi_{-s-t}x)=\pi_{-t}x\in \X$.  Furthermore, if $x\in \X$ and
$\pi_{-t}x\in \X$ then $\pi_{-s}(\pi_{-t}x)=\pi_{-s-t}x$. Hence
\[
\mathbf{1}_\X(\pi_{-t}x)\mathbf{1}_\X(\pi_{-s}(\pi_{-t}x))=\mathbf{1}_\X(\pi_{-s-t}x),
\]
which shows that $S(t)S(s)u(x)=S(s+t)u(x)$ for $ t,s\ge 0, x\in \X, u\in \SX$.
Finally we must show that $\{S(t)\}_{t\ge 0}$ is strongly continuous.
Let $u\in C_c(\X)$, where $C_c(\X)$ is the space of continuous functions which are equal to zero near boundaries.  
For every $x\in \X$ and all sufficiently small $t>0$ we have
$\mathbf{1}_\X(\pi_{-t}x)=1$ and $\pi_{-t}x\to x$ as $t\downarrow 0$.
Consequently,
\[
\lim_{t\downarrow 0}S(t)u(x)=u(x)\quad x\in \X,
\]
which, by the Lebesgue dominated convergence theorem, implies
\[
\lim_{t\downarrow 0}\|S(t)u-u\|=0.
\]
Since the set $C_c(\X)$ is a dense subset of $\SX$, this shows the strong
continuity of the semigroup $\{S(t)\}_{t\ge0}$. If we take $\varphi\equiv 0$ then
$S(t)=P_0(t)$ and we recover the claim for $\{P_0(t)\}_{t\ge 0}$.
\end{proof}

Now we identify the generators of the semigroups from Theorem~\ref{thm:sP0t}. The
maximal domain of $A_0$ in $\SX$ consists of all functions $u\in \SX$ such that
$A_0u\in \SX$. Then the integrability of $A_0u$ implies existence of the finite
limits
\begin{equation}
l_0(u):=\lim_{x\downarrow d_0}g(x)u(x)\quad \text{and}\quad
l_1(u):=\lim_{x\uparrow d_1}g(x)u(x).
\end{equation}
A necessary condition for $A_0$ to generate a stochastic semigroup is that the
limits $l_i$ are equal. Recall that $u\in \SXP$ if and only if $u\in \SX$ and
$\varphi u\in \SX$.

\begin{thm}\label{thm:ASt} If  $G(d_1)=+\infty$
then the operator $A_0$ defined on the domain
\begin{equation}\label{d:dA0}
\mathcal{D}(A_0)=\{u\in \SX: gu\in \ACloc,\; A_0 u\in \SX,\; \lim_{x\downarrow
d_0}g(x)u(x)=0\}
\end{equation}
is the generator of the semigroup $\{P_0(t)\}_{t\ge 0}$ and the operator $A$
defined on $\mathcal{D}(A)=\mathcal{D}(A_0)\cap \SXP$  is the generator of
$\{S(t)\}_{t\ge 0}$.
\end{thm}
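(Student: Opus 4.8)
The plan is to identify the generator through its resolvent. By Theorem~\ref{thm:sP0t} the semigroup $\{S(t)\}_{t\ge0}$ is a strongly continuous contraction semigroup, so it has a generator $B$ whose resolvent is the Laplace transform $R(\lambda,B)u=\int_0^\infty e^{-\lambda t}S(t)u\,dt$ for every $\lambda>0$, and $R(\lambda,B)$ maps $\SX$ bijectively onto $\mathcal{D}(B)$. I would prove $B=A$ in three steps: (i) compute $R(\lambda,B)$ explicitly; (ii) show that $R(\lambda,B)(\SX)\subseteq\mathcal{D}(A)$ and $(\lambda-A)R(\lambda,B)u=u$, which gives $\mathcal{D}(B)\subseteq\mathcal{D}(A)$ and $A=B$ there; and (iii) show that $\lambda-A$ is injective on $\mathcal{D}(A)$, which upgrades the inclusion to the equality $\mathcal{D}(A)=\mathcal{D}(B)$.

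For step (i) I substitute $y=\pi_{-t}x$, so that $t=G(x)-G(y)$ and $dt=-dy/g(y)$, while the factor $\mathbf{1}_\X(\pi_{-t}x)$ restricts $y$ to $(d_0,x)$. Writing $v=R(\lambda,B)u$ and $w=gv$, a direct calculation from \eqref{eq:dst} yields
\begin{equation*}
w(x)=e^{-\lambda G(x)-Q(x)}\int_{d_0}^x e^{\lambda G(y)+Q(y)}u(y)\,dy .
\end{equation*}
This $w$ is locally absolutely continuous, and differentiating gives $gw'=-(\lambda+\varphi)w+gu$ almost everywhere; since $A_0v=-w'$, this reads $A_0v=(\lambda+\varphi)v-u$, hence $Av=A_0v-\varphi v=\lambda v-u$ and $(\lambda-A)v=u$ — provided $v$ really lies in $\mathcal{D}(A)$.

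The verification of $v\in\mathcal{D}(A)$ in step (ii) has two routine parts and one essential one. Since $G$ and $Q$ are nondecreasing, for $u\ge0$ we have $0\le w(x)\le\int_{d_0}^x u(y)\,dy\to 0$ as $x\downarrow d_0$, giving the boundary condition $\lim_{x\downarrow d_0}g(x)v(x)=0$; the general case follows from $|v|\le R(\lambda,B)|u|$. The essential part is the integrability $\varphi v\in\SX$. For $u\ge0$, Tonelli's theorem reduces $\int_\X\varphi v\,dx$ to $\int_\X e^{\lambda G(y)+Q(y)}u(y)\,I(y)\,dy$ with $I(y)=\int_y^{d_1}(\varphi/g)e^{-\lambda G-Q}\,dx$; integrating by parts and using $G(d_1)=+\infty$ to kill the upper boundary term gives $I(y)\le e^{-\lambda G(y)-Q(y)}$, whence $\int_\X\varphi v\,dx\le\|u\|$ (the general case again by $|v|\le R(\lambda,B)|u|$). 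Then $A_0v=(\lambda+\varphi)v-u\in\SX$ as well, so $v\in\mathcal{D}(A_0)\cap\SXP=\mathcal{D}(A)$ and $(\lambda-A)v=u$.

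For step (iii), suppose $v\in\mathcal{D}(A)$ and $(\lambda-A)v=0$. Then $w=gv\in\ACloc$ solves $w'=-((\lambda+\varphi)/g)w$, so $w=Ce^{-\lambda G-Q}$ for a constant $C$. The membership $v\in\mathcal{D}(A_0)$ guarantees the finite limit $\lim_{x\downarrow d_0}g(x)v(x)=0$; since $e^{-\lambda G(x)-Q(x)}$ tends as $x\downarrow d_0$ either to $1$ (when $G(d_0)=Q(d_0)=0$) or to $+\infty$ (in every other admissible case, because $G(d_0)\in\{0,-\infty\}$ and $Q(d_0)\in\{0,-\infty\}$), this forces $C=0$, i.e.\ $v=0$. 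Combining the three steps gives $A=B$. Finally, the statement for $A_0$ is the special case $\varphi\equiv0$: then $Q\equiv0$, $S(t)=P_0(t)$, $\SXP=\SX$, and $\mathcal{D}(A)=\mathcal{D}(A_0)$, so the same argument identifies $A_0$ as the generator of $\{P_0(t)\}_{t\ge0}$. The one genuinely delicate point is the estimate $\varphi v\in\SX$, and it is precisely there that the hypothesis $G(d_1)=+\infty$ is used.
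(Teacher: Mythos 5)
Your proof is correct, and its hard analytic content coincides with the paper's: the same explicit resolvent kernel, the same boundary check at $d_0$, the same two delicate facts ($\varphi$-integrability of the resolvent and injectivity of $\lambda-A$). What is genuinely different is the scaffolding around these computations. The paper (Lemmas~\ref{l:Rlam} and~\ref{l:Rlam2}) first shows that the explicitly defined operator $R_\lambda$ is a bounded inverse of $\lambda-A$ with $\lambda\|R_\lambda\|\le 1$, invokes the Hille--Yosida theorem to conclude that $(A,\mathcal{D}(A))$ generates a positive contraction semigroup, and only then identifies that semigroup with $\{S(t)\}_{t\ge 0}$: it computes $\int_0^t e^{-\lambda s}S(s)v\,ds$ for $v\in C_c(\X)$, $v\ge 0$, lets $t\to\infty$, and uses that the two positive contractions $\lambda R(\lambda,\mathcal{A})$ and $\lambda R_\lambda$ agree on a dense set and hence everywhere. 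You bypass both Hille--Yosida and the density argument: you compute the resolvent of the known generator $B$ (the paper's $\mathcal{A}$) directly for all $u$ (justified by Tonelli for $u\ge0$, then by linearity), show $R(\lambda,B)(L^1)\subseteq\mathcal{D}(A)$ together with $(\lambda-A)R(\lambda,B)=I$, and let injectivity of $\lambda-A$ upgrade the inclusion $\mathcal{D}(B)\subseteq\mathcal{D}(A)$ to equality. Your treatment of the two delicate points is also slightly different, and arguably cleaner: for the integrability the paper integrates the resolvent identity over $(d_0,d_1)$, which requires both boundary limits $l_0$, $l_1$ of $gR_\lambda v$ to vanish (this is exactly where it uses $Q_\lambda(d_1)=+\infty$), whereas you obtain the inequality $\int\varphi\,R(\lambda,B)u\,dx\le\|u\|$ from Tonelli and the bound $I(y)\le e^{-Q_\lambda(y)}$; for injectivity the paper splits into the case $Q(d_0)=-\infty$, handled by non-integrability of $(\lambda+\varphi)e^{-Q_\lambda}/g$, and the case $Q_\lambda(d_0)=0$, handled by the boundary condition, whereas your boundary-condition argument covers both cases at once because $e^{-Q_\lambda(x)}$ tends to $1$ or to $+\infty$ as $x\downarrow d_0$.

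One correction to your closing remark. In \emph{your} proof the hypothesis $G(d_1)=+\infty$ is not actually consumed by the estimate $\varphi v\in L^1$: the boundary term you ``kill'' in the integration by parts is $-e^{-Q_\lambda(d_1)}\le 0$, so the bound $I(y)\le e^{-Q_\lambda(y)}$, and with it all of your step (ii), holds whether or not $G(d_1)=+\infty$. In your argument the hypothesis enters only through Theorem~\ref{thm:sP0t}, which you invoke for the existence of the strongly continuous contraction semigroup $\{S(t)\}_{t\ge 0}$ (and which is what makes $\{P_0(t)\}_{t\ge 0}$ stochastic rather than merely substochastic). It is the paper's version of the integrability argument, resting on $l_1(R_\lambda v)=0$, that genuinely needs $Q_\lambda(d_1)=+\infty$.
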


Before we give the proof of Theorem~\ref{thm:ASt} we first provide general
properties of the operator $A$. Let $\lambda>0$.
 Define the function
$r_\lambda\colon \X\times \X\to[0,\infty)$  by
\begin{equation}\label{eq:Rlamk}
r_\lambda(x,y)= \mathbf{1}_{(d_0,x)}(y)\frac{e^{Q_\lambda(y)-Q_\lambda(x)}}{g(x)}
,\quad \text{where}\quad Q_\lambda(z)=\lambda G(z)+Q(z),
\end{equation}
and the positive linear operator $R_\lambda \colon\SX\to \SX$ by
\begin{equation}\label{eq:Rlam}
R_\lambda v(x)=\int_{d_0}^{d_1}r_\lambda(x,y)v(y)\,dy.
\end{equation}

\begin{lem}\label{l:Rlam} Let $\lambda> 0$.
The operator $R_\lambda$ satisfies
\[
\lambda\| R_\lambda v\|\le \|v\|,\quad v\in \SX. 
\]
For every $v\in \SX$ we have $gR_\lambda v\in\ACloc$ and the function $u=R_\lambda
v$ is a particular solution in $\SX$ of the equation
\begin{equation}\label{eq:resA}
\lambda u-Au=v.
\end{equation}
\end{lem}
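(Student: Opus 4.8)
The plan is to treat the two assertions separately, exploiting the fact that $u=R_\lambda v$ is the solution of a first-order linear ODE built from the integrating factor $e^{Q_\lambda}$. Throughout I would write $w=gR_\lambda v$, so that
$$
w(x)=e^{-Q_\lambda(x)}\int_{d_0}^x e^{Q_\lambda(y)}v(y)\,dy,
$$
and record that, since $Q_\lambda=\lambda G+Q$ is nondecreasing and lies in $\ACloc$, one has $Q_\lambda'(x)=(\lambda+\varphi(x))/g(x)$ a.e.

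For the norm bound I would first reduce to $v\ge0$ using $|R_\lambda v|\le R_\lambda|v|$, which holds because the kernel $r_\lambda$ is nonnegative. Then, as everything in sight is nonnegative, Tonelli's theorem lets me interchange the order of integration:
$$
\|R_\lambda v\|=\int_{d_0}^{d_1}v(y)\Bigl(\int_y^{d_1}\frac{e^{Q_\lambda(y)-Q_\lambda(x)}}{g(x)}\,dx\Bigr)dy.
$$
The crux is to bound the inner integral by $1/\lambda$. This is where $\varphi\ge0$ enters decisively: from $Q_\lambda'=(\lambda+\varphi)/g\ge\lambda/g$ I obtain $1/g(x)\le\lambda^{-1}Q_\lambda'(x)$, whence
$$
\int_y^{d_1}\frac{e^{-Q_\lambda(x)}}{g(x)}\,dx\le\frac{1}{\lambda}\int_y^{d_1}Q_\lambda'(x)e^{-Q_\lambda(x)}\,dx=\frac{1}{\lambda}\bigl(e^{-Q_\lambda(y)}-e^{-Q_\lambda(d_1)}\bigr)\le\frac{1}{\lambda}e^{-Q_\lambda(y)}.
$$
Multiplying through by $e^{Q_\lambda(y)}$ shows the inner integral is $\le1/\lambda$, and $\lambda\|R_\lambda v\|\le\|v\|$ follows at once; note that only $\lambda>0$ and $\varphi\ge0$ are used, not $G(d_1)=+\infty$.

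For the second assertion I would show directly that $w\in\ACloc$ and then verify the equation. On any compact subinterval of $\X$ the function $e^{Q_\lambda}$ is continuous, hence bounded, so $e^{Q_\lambda}v\in\Lloc$ and the indefinite integral $V(x)=\int_{d_0}^x e^{Q_\lambda(y)}v(y)\,dy$ is well defined and finite, convergence at the (possibly infinite or singular) lower endpoint being automatic from $e^{Q_\lambda(y)}\le e^{Q_\lambda(x)}$ for $y\le x$, which gives $|V(x)|\le e^{Q_\lambda(x)}\|v\|$. Thus $V\in\ACloc$ with $V'=e^{Q_\lambda}v$ a.e., and $w=e^{-Q_\lambda}V$ is a product of $\ACloc$ functions, so $w=gR_\lambda v\in\ACloc$. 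Differentiating and using $V'=e^{Q_\lambda}v$ gives $w'+Q_\lambda'w=v$ a.e.; rewriting $A_0u=-w'$, $\varphi u=(\varphi/g)w$, and $Q_\lambda'=(\lambda+\varphi)/g$ turns this into $\lambda u-Au=v$. Since $u=R_\lambda v\in\SX$ by the first part and $Au=\lambda u-v\in\SX$, the function $u$ is a genuine $\SX$-solution of \eqref{eq:resA}.

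I expect the main obstacle to be the norm estimate, and within it the reduction of the inner integral to $1/\lambda$: the whole argument hinges on recognizing that dropping the nonnegative boundary term $e^{-Q_\lambda(d_1)}$ together with the comparison $1/g\le\lambda^{-1}Q_\lambda'$ afforded by $\varphi\ge0$ is exactly what converts the integrating-factor identity into a contraction bound. The remaining work is bookkeeping: justifying Tonelli by nonnegativity, confirming finiteness of $V$ at $d_0$, and invoking the product rule for absolutely continuous functions on compact subintervals.
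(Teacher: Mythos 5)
Your proof is correct and takes essentially the same route as the paper: Tonelli plus a pointwise estimate on the kernel that exploits $\varphi\ge 0$ (monotonicity of $Q$, hence of $Q_\lambda$) for the contraction bound, followed by the integrating-factor identity $g(x)R_\lambda v(x)=e^{-Q_\lambda(x)}\int_{d_0}^{x}e^{Q_\lambda(y)}v(y)\,dy$ to get $gR_\lambda v\in\ACloc$ and the resolvent equation. The only cosmetic difference is in the contraction estimate: the paper discards the factor $e^{Q(y)-Q(x)}\le 1$ and integrates the remaining $G$-exponential exactly, whereas you dominate $1/g$ by $\lambda^{-1}Q_\lambda'$ and integrate the exact derivative $Q_\lambda' e^{-Q_\lambda}$ -- the same computation organized slightly differently.
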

\begin{proof}
$R_\lambda$ is an integral operator with nonnegative measurable kernel
$r_\lambda$. Observe that
\[
\lambda r_\lambda(x,y)\le
\mathbf{1}_{(d_0,x)}(y)\frac{\lambda}{g(x)}e^{\lambda(G(y)-G(x))} \quad \text{for
} x,y\in \X.
\]
Thus for every $y\in \X$ we have
\[
\begin{split}
\lambda \int_{d_0}^{d_1}r_\lambda(x,y)\,dx\le  1 - e^{-\lambda (G(d_1)-G(y))}\le
1,
\end{split}
\]
because  $G(d_1)\ge G(y)$, showing $\lambda\| R_\lambda \|\le 1$. Since $e^{-Q}\in
\ACloc$, we can write
\[
Au(x)=-e^{-Q(x)}\frac{d}{dx}\Bigl(g(x)u(x)e^{Q(x)}\Bigr),
\]
for all $u$ such that $gu\in \ACloc$.  We have
\[
g(x)R_\lambda v(x)=e^{-Q_\lambda(x)}\int_{d_0}^{x}e^{Q_\lambda(y)}v(y)\,dy
\]
and $Q_\lambda(y)\le Q_\lambda(\alpha)$ for every $y\le \alpha<d_1$, thus the
function $e^{Q_\lambda}v$ is integrable on intervals $(d_0,\alpha]$ for every
$\alpha<d_1$. Hence, $gR_\lambda v\in \ACloc$ and $R_\lambda v$ satisfies
\eqref{eq:resA}.
\end{proof}

\begin{lem}\label{l:Rlam2}
If $Q_\lambda(d_1)=+\infty$ then $R_\lambda(\SX)\subseteq \mathcal{D}(A)$ and
\[
R_\lambda (\lambda u-Au)=u \quad \text{for} \quad u\in\mathcal{D}(A).
\]
\end{lem}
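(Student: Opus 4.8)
The plan is to verify the two assertions separately, leaning on Lemma~\ref{l:Rlam}, which already provides that $R_\lambda v\in\SX$, that $gR_\lambda v\in\ACloc$, and that $u=R_\lambda v$ solves $\lambda u-Au=v$. To show $R_\lambda(\SX)\subseteq\mathcal{D}(A)$ it therefore remains to check the three outstanding membership conditions: the boundary condition $\lim_{x\downarrow d_0}g(x)R_\lambda v(x)=0$, the integrability $\varphi R_\lambda v\in\SX$ (so that $R_\lambda v\in\SXP$), and $A_0R_\lambda v\in\SX$. The last of these is automatic once the first two hold, since the resolvent relation $\lambda u-Au=v$ rearranges to $A_0u=\lambda u+\varphi u-v$, whose right-hand side then lies in $\SX$.

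For the boundary behaviour at $d_0$ I would use the representation $g(x)R_\lambda v(x)=\int_{d_0}^x e^{Q_\lambda(y)-Q_\lambda(x)}v(y)\,dy$ from Lemma~\ref{l:Rlam}. Because $Q_\lambda=\lambda G+Q$ is nondecreasing, the exponential factor is bounded by $1$ for $y\le x$, so $|g(x)R_\lambda v(x)|\le\int_{d_0}^x|v(y)|\,dy$, which tends to $0$ as $x\downarrow d_0$ by absolute continuity of the integral of $v\in\SX$. This estimate is clean and, notably, does not require the hypothesis $Q_\lambda(d_1)=+\infty$; it also sidesteps any indeterminate behaviour when $Q_\lambda(d_0)=-\infty$.

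The crux of the argument, and the step where $Q_\lambda(d_1)=+\infty$ is actually used, is the integrability $\varphi R_\lambda v\in\SX$. Here I would take $v\ge 0$ (the general case follows from $|R_\lambda v|\le R_\lambda|v|$ and positivity of $R_\lambda$), write out $\int_\X\varphi R_\lambda v\,dx$ with the kernel $r_\lambda$, and apply Tonelli to exchange the order of integration, obtaining $\int_\X v(y)\bigl(\int_y^{d_1}\tfrac{\varphi(x)}{g(x)}e^{Q_\lambda(y)-Q_\lambda(x)}\,dx\bigr)dy$. Since $\varphi/g=Q'\le Q_\lambda'$ almost everywhere, the inner integral is dominated by $e^{Q_\lambda(y)}\int_y^{d_1}Q_\lambda'(x)e^{-Q_\lambda(x)}\,dx=e^{Q_\lambda(y)}\bigl(e^{-Q_\lambda(y)}-e^{-Q_\lambda(d_1)}\bigr)$, and the assumption $Q_\lambda(d_1)=+\infty$ kills the last term, leaving a bound of $1$. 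Hence $\int_\X\varphi R_\lambda v\,dx\le\|v\|$, which finishes the proof that $R_\lambda v\in\mathcal{D}(A)$. I expect this Tonelli-and-estimate step to be the main technical obstacle, chiefly in justifying the boundary evaluation of $e^{-Q_\lambda}$ at $d_1$ and the almost-everywhere inequality $Q'\le Q_\lambda'$.

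For the resolvent identity $R_\lambda(\lambda u-Au)=u$ on $\mathcal{D}(A)$, I would set $v=\lambda u-Au\in\SX$ and compare $u$ with $w:=R_\lambda v$. By Lemma~\ref{l:Rlam} both solve $\lambda(\cdot)-A(\cdot)=v$, so $h=u-w$ lies in the kernel of $\lambda-A$ with $gh\in\ACloc$. Solving the homogeneous equation $\lambda h=Ah=-e^{-Q}\tfrac{d}{dx}(ghe^{Q})$ explicitly gives $g(x)h(x)=Ce^{-Q_\lambda(x)}$ for a constant $C$. Since $u\in\mathcal{D}(A)$ by hypothesis and $w\in\mathcal{D}(A)$ by the first part, both satisfy the $d_0$ boundary condition, so $\lim_{x\downarrow d_0}g(x)h(x)=0$; evaluating $Ce^{-Q_\lambda(x)}$ as $x\downarrow d_0$ (whether $Q_\lambda(d_0)$ equals $0$ or $-\infty$) forces $C=0$, whence $h=0$ and $u=R_\lambda v$. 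The only care needed here is that the homogeneous ODE be handled in the $\ACloc$ sense and that the boundary limit genuinely rules out the nontrivial kernel element.
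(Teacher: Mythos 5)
Your proof is correct, and it reaches the lemma by a genuinely different route from the paper at both key steps. For the integrability $\varphi R_\lambda v\in\SX$ the paper makes no Tonelli computation: it first uses dominated convergence together with the hypothesis $Q_\lambda(d_1)=+\infty$ to show that \emph{both} boundary limits $l_0(R_\lambda v)$ and $l_1(R_\lambda v)$ vanish, and then integrates the identity $(\lambda+\varphi)R_\lambda|v|=|v|+A_0(R_\lambda|v|)$ over all of $\X$, the vanishing boundary terms yielding the exact equality $\int_\X(\lambda+\varphi(x))R_\lambda|v|(x)\,dx=\|v\|$; your argument instead obtains the inequality $\|\varphi R_\lambda|v|\|\le\|v\|$ directly from the kernel estimate. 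For the injectivity step the paper argues by cases of a different nature: when $|Q(d_0)|=+\infty$ it shows that a nonzero homogeneous solution $ce^{-Q_\lambda}/g$ would make $ce^{-Q_\lambda}(\lambda+\varphi)/g$ non-integrable, contradicting membership in $\mathcal{D}(A)$, and it invokes the boundary condition only in the case $Q_\lambda(d_0)=0$; you invoke the boundary condition $l_0=0$ uniformly, noting that $Ce^{-Q_\lambda(x)}$ tends to $C$ or to $\pm\infty$ as $x\downarrow d_0$, neither of which is $0$ unless $C=0$ --- this is sound and, if anything, cleaner, since it needs no case split. The comparison also exposes one mischaracterization in your write-up: the hypothesis $Q_\lambda(d_1)=+\infty$ is \emph{not} actually used in your Tonelli step, because the term $e^{Q_\lambda(y)-Q_\lambda(d_1)}$ enters with a favorable sign and the bound $1-e^{Q_\lambda(y)-Q_\lambda(d_1)}\le 1$ holds unconditionally; in fact no step of your argument (the $d_0$-estimate by absolute continuity, the Tonelli bound, or the boundary limit in the injectivity part) ever requires that hypothesis, so your proof establishes the lemma without it. The paper's proof, by contrast, needs the hypothesis genuinely: without it $l_1(R_\lambda v)$ need not vanish and the integrated identity acquires a boundary term at $d_1$. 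In short, your route buys generality and a self-contained kernel estimate, while the paper's buys an exact mass identity with essentially no computation.
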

\begin{proof}
First we show that $R_\lambda(\SX)\subseteq\mathcal{D}(A)$.
 Let $v\in \SX$. Then
$gR_\lambda v\in\ACloc$,
\[
|g(x)R_\lambda v(x)|\le \int_{d_0}^{d_1}|g(x)r_\lambda(x,y)| |v(y)|\,dy
\quad\text{for}\quad x\in \X,
\]
and $|g(x)r_\lambda(x,y)|\le \mathbf{1}_{(d_0,x)}(y)$ for all $x,y\in \X$. From
the definition of $r_\lambda$ and the assumption $Q_\lambda(d_1)=+\infty$ it
follows that
\[
\lim_{x\to d_i} |g(x)r_\lambda(x,y)|=0 \quad \text{for } y\in \X.
\]
By the Lebesgue dominated convergence theorem,  $ l_i(R_\lambda v)=0$, $i=0,1$,
for all $v\in \SX$. Since $|v|\in \SX$, the function $R_\lambda |v|$ is a
particular solution of
\[
(\lambda+\varphi) R_\lambda |v|= |v|+A_0(R_\lambda |v|)
\]
and $l_i(R_\lambda |v|)=0$ for $i=0,1$. Hence
\begin{equation*}\label{e}
\int_{d_0}^{d_1}(\lambda+\varphi(x)) R_\lambda
|v|(x)\,dx=\int_{d_0}^{d_1}|v(x)|\,dx,
\end{equation*}
which shows that $\|\varphi R_\lambda |v|\|<\infty$ and $R_\lambda v\in \SXP$.
Finally, from \eqref{eq:resA} it follows that
\[
A_0(R_\lambda v)=(\lambda +\varphi)R_\lambda v-v\in \SX. \]

Now let $ u\in \mathcal{D}(A)$ and $v:=\lambda u-Au$. Since $u\in \SXP$ and
$A_0u\in \SX$, we have $v\in \SX$ and $R_\lambda v\in\mathcal{D}(A)$. Hence
$w:=u-R_\lambda v\in \mathcal{D}(A)$ and $ Aw=\lambda w. $ The general solution
$w$ of this equation is of the form
\[
w(x)=c \frac{e^{-Q_\lambda(x)}}{g(x)},\quad x\in \X,
\]
where $c$ is a constant. Thus
\[
g(x)(u(x)-R_\lambda v(x))=c e^{-Q_\lambda(x)}\quad \text{for }x\in \X.
\]
If $|Q(d_0)|=+\infty$ and $c\neq 0$, then $ce^{-Q_\lambda}(\lambda +\varphi)/g$ is
not integrable, which contradicts $u-R_\lambda v\in \mathcal{D}(A)$ and gives
$c=0$. If $Q_\lambda(d_0)=0$  then $\l_0(u-R_\lambda v)=c$, which also gives $c=0$
and shows that $u=R_\lambda v$.
\end{proof}

\begin{proof}[Proof of Theorem~\ref{thm:ASt}]
Since $G(d_1)=+\infty$, the assumptions of Lemma~\ref{l:Rlam2} hold even when
$\varphi=0$. Thus it is enough to show that $(A,\mathcal{D}(A))$ is the generator
of $\{S(t)\}_{t\ge 0}$. Observe that $\SX= \rg(\lambda-A)$, by
Lemma~\ref{l:Rlam2}. Since \eqref{eq:resA} has a unique solution $u=R_\lambda v$,
we have $R_\lambda=(\lambda-A)^{-1}$ for $\lambda>0$ and
\[
\|\lambda u-Au\|=\|v\|\ge \lambda \|R_\lambda v\|=\lambda \|u\|\quad \text{for }
u\in \mathcal{D}(A).
\]
The operator $\lambda R_\lambda$ is a positive contraction. By the Hille-Yosida
theorem, the operator $A$ is a generator of a positive contraction semigroup.  Let
$\mathcal{A}$ be the generator of the semigroup $\{S(t)\}_{t\ge 0}$. It remains to
prove that $\mathcal{A}=A$.

 First, we will show that 
\[
R(\lambda,\mathcal{A})v=R_\lambda v\quad \text{for } v\in C_c(\X), \;v\ge 0.
\]
Let $ v\in C_c(\X)$, $v\ge 0$. 
We have
\[
R(\lambda,\mathcal{A})v=\lim_{t\to \infty}\int_{0}^t e^{-\lambda s}S(s)v\,ds,
\]
where the integral is an element of $\SX$ such that
\[
\left (\int_{0}^t e^{-\lambda s}S(s)v\,ds\right )(x)=\int_{0}^t e^{-\lambda
s}S(s)v(x)\,ds.
\]
Let $x\in \X$ and $t>0$. Define $s_*(x)=\sup\{s>0: \pi_{-s}x\in
\X\}$ and note that $\pi_{-s_*(x)}x=d_0$.  
Making use of the formula for $S(t)v$ and the fact that $G(\pi_{-s}x)=G(x)-s$ when
$\pi_{-s}x\in \X$ leads to
\[
\begin{split}
\int_{0}^t e^{-\lambda s}S(s)v(x)\,ds&=\int_{0}^{t\wedge s_*(x)}
e^{Q_\lambda(\pi_{-s}x)-Q_\lambda(x)}\frac{v(\pi_{-s}x)g(\pi_{-s}x)}{g(x)}\,ds.
\end{split}
\]
By a change of variables, we obtain
\[
\left |\int_{0}^t e^{-\lambda s}S(s)v(x)\,ds-R_\lambda v(x) \right |\le
\frac{1}{g(x)}
e^{-Q_\lambda(x)} w(t,x), 
\]
where
\[
w(t,x)=\left\{\begin{array}{ll} \int_{d_0}^{\pi_{-t}x}e^{Q_\lambda(z)}v(z)\,dz, &
\hbox{if } t< s_*(x);\\ 0, & \hbox{if } t\ge s_*(x).\\
\end{array}
\right.
\]
We have $w(t,x)\downarrow 0$ as $t\uparrow \infty$, thus 
\[
\lim_{t\to\infty}\left \|\int_{0}^t e^{-\lambda s}S(s)v\,ds-R_\lambda v \right
\|=0.
\]

Since $C_c(\X)$ is a dense subset of $\SX$ and both operators $\lambda
R(\lambda,\mathcal{A})$ and $\lambda R_\lambda$ are positive contractions, they
are identical. This shows that $\mathcal{D}(\mathcal{A})=\mathcal{D}(A)$ and
$(\lambda -\mathcal{A})u=(\lambda-A)u$ for $u\in\mathcal{D}(A)$, which completes
the proof.
\end{proof}

\begin{rem}
Observe that if $G(d_0)=-\infty$ then the domain of $A_0$ is
\[
\mathcal{D}(A_0)=\{u\in \SX: gu\in \ACloc,\; A_0 u\in \SX\}.
\]
\end{rem}

\section{Asymptotic properties }\label{sec:Pt}

In this section we assume that $g>0$ and $G(d_1)=+\infty$. Let
$P\colon\SX\to\SX$ be a stochastic operator and let
$\mathcal{C}$ be the operator
\[
\mathcal{C}u(x)=-\frac{d}{dx}(g(x)u(x))-\varphi(x) u(x)+P(\varphi u)(x),\quad x\in
(d_0,d_1),
\]
defined on  $\mathcal{D}(A)=\mathcal{D}(A_0)\cap \SXP$ with
$\mathcal{D}(A_0)$ as in~\eqref{d:dA0}. By
Section~\ref{sec:Pert}, it follows from Theorems~\ref{thm:sP0t}
and~\ref{thm:ASt} that there is a positive, strongly
continuous, contraction semigroup $\{P(t)\}_{t\ge 0}$ on $\SX$
whose generator is an extension of the operator
$(\mathcal{C},\mathcal{D}(A))$. In this section we give
sufficient conditions for $\{P(t)\}_{t\ge 0}$ to be a
stochastic semigroup and study its asymptotic properties.

Define the operator $R_0$ on $\mathcal{D}(R_0)=\{v\in\SX: R_0v\in \SX\}$  by
\begin{equation*}
R_0 v(x)=\int_{d_0}^{d_1} r_0(x,y)v(y)\,dy,\quad \text{where}\quad
r_0(x,y)=\mathbf{1}_{(d_0,x)}(y)\frac{1}{g(x)} e^{Q(y)-Q(x)}.
\end{equation*}
In general, $R_0$ may be an unbounded operator. 
We have
\[
\varphi(x)R_0v(x)=\int_{d_0}^{d_1}\varphi(x)r_0(x,y)v(y)\,dy
\]
for every $v\in\mathcal{D}(R_0)$. Since $Q$ is nondecreasing, 
\[
\int_{d_0}^{d_1}\varphi(x)r_0(x,y)\,dx=\int_{y}^{d_1}\frac{\varphi(x)}{g(x)}
e^{Q(y)-Q(x)}dx=1-e^{Q(y)-Q(d_1)}\le 1
\]
for every $y\in \X$. Thus
the operator $\varphi R_0$ can be uniquely extended, with the same formula, to a
positive contraction on $\SX$. Observe that $\varphi R_0$ is  stochastic  if and
only if $Q(d_1)=+\infty$.

Define the operator $K$  by
\begin{equation}\label{d:opP}
Ku=P(\varphi R_0) u\quad\text{for } u\in \SX.
\end{equation}
Since $P$ is a stochastic operator, $K$ is  stochastic  if and only if
$Q(d_1)=+\infty$.

%
%
%

\begin{thm}\label{thm:uniqa}
Assume that $Q(d_1)=+\infty$.

If the operator $K$ has an invariant density $v_*>0$ a.e. then $\{P(t)\}_{t\ge 0}$
is a stochastic semigroup. Moreover, if $\{P(t)\}_{t\ge 0}$ is partially integral
and $R_0v_*\in \SX$ then $\{P(t)\}_{t\ge 0}$ is asymptotically stable with
invariant density $u_*=R_0v_*/\|R_0v_*\|$.

Conversely, if  $\{P(t)\}_{t\ge 0}$ has an invariant density $u_*\in
\mathcal{D}(A)$ then the density $P(\varphi u_*)/\|\varphi u_*\|$ is invariant for
the operator $K$.
\end{thm}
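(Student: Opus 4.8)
The plan is to handle the three assertions in turn: the stochasticity reduces to Theorem~\ref{thm:uniq} after a kernel comparison, the invariant density is built from $R_0v_*$, the asymptotic stability follows from Theorem~\ref{thm:pr00} once uniqueness is settled, and the converse is the invariance computation read backwards. First, for stochasticity, I would compare $r_\lambda$ and $r_0$. Since $r_\lambda(x,y)=r_0(x,y)e^{\lambda(G(y)-G(x))}$ is supported on $\{y<x\}$ and $G$ is strictly increasing, we have $r_\lambda\le r_0$ for every $\lambda>0$, hence $R_\lambda v\le R_0v$ for $v\ge0$. Multiplying by $\varphi\ge0$ and applying the positive operator $P$ gives $P(\varphi R_\lambda)v\le P(\varphi R_0)v=Kv$ for $v\ge0$. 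Taking $v=v_*$ and using $Kv_*=v_*$ yields $P(\varphi R_\lambda)v_*\le v_*$ with $v_*>0$ a.e.; since $R_\lambda=R(\lambda,A)$, this is exactly the hypothesis of Theorem~\ref{thm:uniq}, so \eqref{eq:css} holds and Theorem~\ref{thm:pert} shows that $\{P(t)\}_{t\ge0}$ is stochastic.

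Next I would exhibit the invariant density. Put $w=R_0v_*$, which lies in $\SX$ by assumption. Since $R_0$ inverts $-A$ (the $\lambda=0$ case of Lemma~\ref{l:Rlam}), $Aw=-v_*$, while $Kv_*=v_*$ reads $P(\varphi w)=v_*$; hence $\mathcal{C}w=Aw+P(\varphi w)=-v_*+v_*=0$. To deduce that $w$ is fixed by the semigroup I would check $w\in\mathcal{D}(A)$ by repeating the computation of Lemma~\ref{l:Rlam2}, available because $Q(d_1)=+\infty$: from $g(x)w(x)=e^{-Q(x)}\int_{d_0}^{x}e^{Q(y)}v_*(y)\,dy$ one reads off $l_0(w)=0$ and $w>0$ everywhere, $\varphi w\in\SX$ because $\|\varphi w\|=\|P(\varphi w)\|=\|v_*\|=1$ ($P$ stochastic, $\varphi w\ge0$), and $A_0w=Aw+\varphi w\in\SX$. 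As $C$ extends $\mathcal{C}$ on $\mathcal{D}(A)$, we get $Cw=0$, so $P(t)w=w$, and $u_*=w/\|w\|$ is an invariant density with $u_*>0$ a.e.

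For asymptotic stability I would invoke Theorem~\ref{thm:pr00}: the semigroup is stochastic, partially integral by hypothesis, and admits the strictly positive invariant density $u_*$. The one remaining hypothesis is that $u_*$ is the \emph{unique} invariant density, and this is the step I expect to be the main obstacle, since everything else is a direct computation. I would derive it from partial integrality together with $u_*>0$ a.e.: if there were two distinct invariant densities, an ergodic-decomposition argument yields invariant densities with disjoint supports, which the strictly positive partial kernel of some $P(t_0)$ is forced to couple, contradicting invariance. With uniqueness established, Theorem~\ref{thm:pr00} gives asymptotic stability with limit $u_*$.

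Finally, for the converse, suppose $u_*\in\mathcal{D}(A)$ is invariant for $\{P(t)\}_{t\ge0}$. Then $Cu_*=\mathcal{C}u_*=Au_*+P(\varphi u_*)=0$, so $-Au_*=P(\varphi u_*)$. Applying $R_0(-Au_*)=u_*$ (the $\lambda=0$ case of Lemma~\ref{l:Rlam2}, valid since $Q(d_1)=+\infty$) gives $R_0\big(P(\varphi u_*)\big)=u_*$, whence $\varphi R_0\big(P(\varphi u_*)\big)=\varphi u_*$ and therefore $K\big(P(\varphi u_*)\big)=P\big(\varphi R_0(P(\varphi u_*))\big)=P(\varphi u_*)$. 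Since $\|P(\varphi u_*)\|=\|\varphi u_*\|$, the density $P(\varphi u_*)/\|\varphi u_*\|$ is invariant for $K$, which is the claim.
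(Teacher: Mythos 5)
Your first, second, and fourth steps are correct and essentially reproduce the paper's own argument: the comparison $r_\lambda\le r_0$ giving $P(\varphi R(\lambda,A))v_*\le Kv_*=v_*$ and stochasticity via Theorems~\ref{thm:uniq} and~\ref{thm:pert}; the identification $A(R_0v_*)=-v_*$, $P(\varphi R_0v_*)=v_*$, hence $Cu_*=0$ for $u_*=R_0v_*/\|R_0v_*\|$; and the converse via $R_0(-Au_*)=u_*$. (One small circularity there: you justify $\varphi R_0v_*\in\SX$ by applying $P$ to it, which presupposes it is in $\SX$; the clean route is that $\varphi R_0$ is by construction a contraction --- indeed stochastic when $Q(d_1)=+\infty$ --- so $\varphi R_0v_*\in\SX$ with norm $1$ automatically.)

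The genuine gap is the uniqueness step, which you rightly flag as the main obstacle but then resolve with an invalid argument. Partial integrality only provides a measurable $p\ge 0$ with $\int_\X\int_\X p(x,y)\,dy\,dx>0$ bounding $P(t_0)$ from below; this kernel is not strictly positive, and nothing forces it to connect two invariant densities with disjoint supports. Concretely, split $\X$ into two invariant pieces and let the semigroup act as an honest integral semigroup on the first piece and as a measure-preserving deterministic flow on the second: the resulting semigroup is partially integral, admits a strictly positive invariant density (the average of one from each piece), yet has many invariant densities and is not asymptotically stable. So partial integrality together with $u_*>0$ a.e.\ does not yield uniqueness --- which is precisely why Theorem~\ref{thm:pr00} carries uniqueness as a separate hypothesis. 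The paper instead obtains uniqueness from the one-dimensional transport structure: by the resolvent formula \eqref{eq:rp}, $R(1,C)u\ge R(1,A)u=R_1u$ for $u\ge 0$ (keep only the $n=0$ term), and since $r_1(x,y)>0$ for $d_0<y<x<d_1$, the function $R_1u$ is strictly positive on an interval $(b(u),d_1)$ for every density $u$; hence $R(1,C)$ overlaps supports and so has at most one fixed density. Since the stationary densities of $\{P(t)\}_{t\ge 0}$ are exactly the fixed densities of $R(1,C)$ (as $Cu_*=0$ iff $R(1,C)u_*=u_*$), the semigroup has at most one invariant density, and only then does Theorem~\ref{thm:pr00} apply. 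Your proof needs this (or an equivalent) argument inserted in place of the coupling claim.
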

\begin{proof} The operator $R_1=R(1,A)$ as defined in \eqref{eq:Rlam}
satisfies $(\varphi R_1)u\le (\varphi R_0)u$ for every density $u$, which is due
to the fact that $\varphi\ge 0$ and $r_1\le r_0$. Since $P$ is a
positive operator, 
we have $P(\varphi R(1,A))v_*\le v_*$ and $\{P(t)\}_{t\ge 0}$ is a stochastic
semigroup, by Theorems~\ref{thm:uniq} and~\ref{thm:pert}. Now let $C$ be the
generator of $\{P(t)\}_{t\ge 0}$. From \eqref{eq:rp} it follows that $R(1,C)u\ge
R_1 u$ for every density $u$. Since there is a $b(u)\in \X$ such that $R_1u(x)>0$
for $x\ge b(u)$, the stochastic operator $R(1,C)u$ overlaps supports. Recall that
$u_*$ is a stationary density for the semigroup $\{P(t)\}_{t\ge 0}$ if and only if
$u_*\in\mathcal{D}(C)$ and $Cu_*=0$. We have
\[v_*=P(\varphi
R_0)v_*=\|R_0v_*\|P(\varphi u_*).\] It is easily seen that $u_*\in\mathcal{D}(A)$
and $Au_*=-v_*/\|R_0v_*\|$. Hence $Cu_*=0$ and Theorem~\ref{thm:pr00} applies.

Finally, suppose that $P(t)u_*=u_*$ for all $t\ge 0$ with $u_*\in\mathcal{D}(A)$.
Since $\mathcal{D}(A)\subset \SXP$ and $Cu_*=Au_*+P(\varphi u_*)$, we
obtain $-Au_*=P(\varphi u_*)$. Thus $v:=P(\varphi u_*)\in \SX$  and 
$ Kv=P(\varphi R_0 v)=-P(\varphi R_0Au_*)$. It is easily seen that $R_0Au_*=-u_*$
and we recover the claim.
\end{proof}

\begin{rem}
Observe that when $\varphi(x)\ge b>0$ for all $x$, then the operator $R_0$ is
bounded, thus $R_0=R(0,A)=-A^{-1}$ and $R_0v_*$ is integrable. Moreover, if
$\varphi$ is a constant function, $\varphi\equiv b$, then
$(\mathcal{C},\mathcal{D}(A))$ is the generator of $\{P(t)\}_{t\ge 0}$, by the
Phillips perturbation theorem,  and $K=P(b R(b,A_0))$. In that case, the relation
between the invariant densities for the operator $K$ and the semigroup
$\{P(t)\}_{t\ge0}$ are a consequence of the equation $\mathcal{C}u_*=0$ which is
now $A_0u_*-bu_*+bPu_*=0$ (see also \cite{lasota03}).
\end{rem}


General sufficient conditions for existence of invariant densities for stochastic
operators have been summarized by \cite[Section 5]{almcmbk94} and
\cite{rudnickipichortyran02}.  We now discuss when a stochastic semigroup
$\{P(t)\}_{t\ge 0}$ is partially integral. Let $P$ be the transition operator
corresponding to a stochastic kernel $\mathcal{J}$. If there is a Borel measurable
function $p\colon
\X\times\X\to[0,\infty)$ such that 
\begin{equation*}
\int_{\X}\int_{\X}p(x,y)\varphi(y)dydx>0\quad \text{and}\quad \mathcal{J}(x,B)\ge
\int_{B}p(y,x)dy,\quad B\in\B(\X),
\end{equation*}
then $\{P(t)\}_{t\ge 0}$ is partially
integral~\cite{pichorrudnicki00,pichorrudnicki00b}.  Now, if $P$ is the
Frobenius-Perron operator corresponding to a nonsingular transformation $\sigma
\colon \X\to \X$ then $\mathcal{J}(x,B)=\mathbf{1}_B(\sigma(x))$, $x\in \X$,
$B\in\B(\X)$, and we have the following result.

%

\begin{prop}[\cite{pichorrudnicki00,pichorrudnicki00b}]\label{p:pichrud}
Let $\sigma\colon \X\to \X$ be continuously differentiable with $\sigma'(x)\neq 0$
for almost every $x\in \X$ and $P$ be the Frobenius-Perron operator corresponding
to $\sigma$. Assume that the semigroup $\{P(t)\}_{t\ge 0}$ is stochastic. If there
is $\bar{x}\in \X$ such that $\varphi$ is continuous at $\bar{x}$,
$\varphi(\bar{x})>0$,  and $g(\sigma(\bar{x}))\neq \sigma'(\bar{x})g(\bar{x})$
then $\{P(t)\}_{t\ge 0}$ is partially integral.
\end{prop}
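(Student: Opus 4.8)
The plan is to extract an absolutely continuous piece of $P(t_0)$ from its one--jump part. Note first that the sufficient condition stated just before the proposition cannot be applied directly: for a Frobenius--Perron operator the jump kernel is $\mathcal{J}(x,\cdot)=\delta_{\sigma(x)}$, which admits no absolutely continuous lower bound, so the absolute continuity must be manufactured by integrating over the random jump time. Concretely, for a density $u$ positivity gives $P(r)u\ge S(r)u$ for all $r\ge 0$, and substituting $P(t-s)\ge S(t-s)$ into \eqref{eq:df} yields
\[
P(t)u\ge U_1(t)u,\qquad U_1(t)u:=\int_0^t S(t-s)P(\varphi S(s)u)\,ds,
\]
the contribution of trajectories with exactly one jump (the first Dyson--Phillips term). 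It therefore suffices to show that $U_1(t_0)$ is partially integral for a suitable $t_0>0$.

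Next I would make $U_1(t)$ explicit by tracking a trajectory that starts at $y$, flows for time $s$, jumps through $\sigma$, and flows for the remaining time $t-s$; its endpoint is
\[
x(s,y)=\pi_{t-s}\bigl(\sigma(\pi_s y)\bigr).
\]
For each fixed $s$ the operator $S(t-s)P(\varphi S(s)\,\cdot\,)$ transports $u$ by the deterministic map $y\mapsto x(s,y)$ and is still singular; the point is that $U_1(t)$ superposes these transports over $s\in[0,t]$. Using $\partial_r\pi_r w=g(\pi_r w)$ and $\partial_w\pi_r w=g(\pi_r w)/g(w)$ (both from differentiating $G(\pi_r w)=G(w)+r$), a direct computation gives
\[
\frac{\partial x(s,y)}{\partial s}=g\bigl(x(s,y)\bigr)\left[\frac{\sigma'(\pi_s y)\,g(\pi_s y)}{g(\sigma(\pi_s y))}-1\right].
\]
At a time $s$ with $\pi_s y=\bar x$ this equals $g(x(s,y))\bigl[\sigma'(\bar x)g(\bar x)/g(\sigma(\bar x))-1\bigr]$, which is nonzero precisely because $g>0$ and $g(\sigma(\bar x))\neq\sigma'(\bar x)g(\bar x)$. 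This non-degeneracy is the entire content of the hypothesis.

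Finally I would localize. Fix $\bar y\in\X$ with $\bar y<\bar x$, set $\bar s=G(\bar x)-G(\bar y)>0$ so that $\pi_{\bar s}\bar y=\bar x$, and choose $t_0>\bar s$. By continuity of $g,\sigma,\sigma'$ and of $\varphi$ at $\bar x$, together with $\varphi(\bar x)>0$, there is an open neighborhood $N$ of $(\bar s,\bar y)$ in $(0,t_0)\times\X$ on which $\varphi(\pi_s y)>0$ and $\partial_s x(s,y)\neq 0$. On $N$ the nonvanishing of $\partial_s x$ lets one use the jump time $s$ as a coordinate sweeping out an interval of starting points; changing variables $s\mapsto y$ in the integral defining $U_1(t_0)$ then converts it into an absolutely continuous kernel, producing a measurable $p\colon\X\times\X\to[0,\infty)$ (the Jacobian factor $|\partial_s x|^{-1}$ times the strictly positive weights carried by $S$, $\varphi$ and $P$, supported in the image of $N$) with
\[
U_1(t_0)u(x)\ge\int_\X p(x,y)u(y)\,dy,\qquad \int_\X\int_\X p(x,y)\,dy\,dx>0.
\]
Since $P(t_0)\ge U_1(t_0)$, the semigroup is partially integral. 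The main obstacle is this last step: one must verify that the change of variables is legitimate (local injectivity of $\sigma$ from $\sigma'\neq0$ a.e., and the implicit function theorem for inverting $s\mapsto x(s,y)$, whose derivative is controlled by $\partial_s x$ via the always-nonzero $\partial_y x$), that the resulting $p$ is jointly measurable, and that the strictly positive integrand over the open set $N$ indeed gives $\int\int p>0$; possible non-injectivity of $\sigma$ away from $\bar x$ only adds nonnegative contributions and does no harm.
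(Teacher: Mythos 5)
The paper itself offers no proof of this proposition --- it is quoted from \cite{pichorrudnicki00,pichorrudnicki00b} --- so there is no internal proof to compare against; your argument is in substance the argument of those references, and it is correct. The two essential points are exactly the ones you isolate: (i) since the jump kernel of a Frobenius--Perron operator is $\mathcal{J}(x,\cdot)=\delta_{\sigma(x)}$, absolute continuity must come from the first Dyson--Phillips (one-jump) term $U_1(t)u=\int_0^t S(t-s)P(\varphi S(s)u)\,ds$, which minorizes $P(t)u$ because the integrand in \eqref{eq:df} is nonnegative and $P(t-s)\ge S(t-s)$; and (ii) the hypothesis $g(\sigma(\bar x))\neq \sigma'(\bar x)g(\bar x)$ is precisely the nonvanishing of $\partial_s x(s,y)$ at $\pi_s y=\bar x$ --- your formula for $\partial_s x$, obtained by differentiating $G(\pi_r w)=G(w)+r$, is correct --- so integration over the jump time $s$ can be converted, by a one-dimensional change of variables with Jacobian $|\partial_s x|^{-1}$, into an absolutely continuous kernel supported near the flow-out of $(\bar s,\bar y)$, and positivity of the weights on the open set $N$ gives $\int\int p>0$. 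Two small points should be tightened. First, \eqref{eq:df} is stated only for $u\in\mathcal{D}(A)$, whereas partial integrality demands the kernel bound for \emph{every} density; since $P(t_0)$ and the kernel operator are bounded positive operators, the inequality extends from the dense set of densities in $\mathcal{D}(A)$ to all of $D$ by continuity, and this step should be said explicitly. Second, your phrase ``changing variables $s\mapsto y$'' is inconsistent with the Jacobian $|\partial_s x|^{-1}$ you actually use: the clean route is, for fixed starting point $y$, to substitute $x=x(s,y)$ in the inner integral; working through the adjoints ($P^*f=f\circ\sigma$ and the explicit weighted-composition adjoint of $S(r)$) shows moreover that injectivity of $\sigma$ is never needed, which puts your closing remark about non-injectivity on firm ground. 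Neither point affects the validity of the proof.
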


\section{Specific examples}\label{sec:exp-growth}

As mentioned in the introduction, we were originally motivated by the work of
Lasota et al.~\cite{al-mcm-jt-92} who, in turn, were trying to understand the
experimentally observed asymptotic properties of cell property densities in
cellular populations in a growth phase. This led, naturally, to a consideration of
dynamics such that $g > 0$ for $x\in(d_0,d_1)$ as we have considered in
Sections~\ref{sec:St} and~\ref{sec:Pt}, and there are a number of concrete
situations in this category to which we can apply our results. This section
illustrates some of these.  Several of the examples are drawn from the field of
cell cycle kinetics, while others illustrate the necessity of certain assumptions.




Suppose first that the reset function is given by $\sigma(x)=x-\rho(x)$, where
$\rho$ is a continuously differentiable
function with $\rho'(x)<1$. 
Then the transition operator $P$ is of the form $Pu(x)=\lambda '(x
)u(\lambda(x))1_\X(\lambda(x))$, where $\lambda (x) =\sigma^{-1}(x)$ is the
inverse of $\sigma$, and  we have the evolution equation
\[
 \dfrac {\partial u (t,x)}{\partial t}
    =- \dfrac {\partial (g(x) u(t,x))}{\partial x} -\varphi (x) u(t,x)  +
    u (t ,\lambda (x)) \varphi ( \lambda (x)) \lambda ' (x )1_\X(\lambda(x)).
\]
If $Q(d_1)=\infty$ then the stochastic operator $K$ as defined in \eqref{d:opP} is
of the form
\begin{equation}\label{op:cc}
Ku(x)=\lambda'(x)\frac{\varphi(\lambda(x))}{g(\lambda(x))}\mathbf{1}_{(d_0,d_1)}(\lambda(x))\int
_{d_0}^{\lambda (x)} e^{Q(y)-Q(\lambda(x))}
    u(y)\, dy,
\end{equation}
which is the transition operator corresponding to \eqref{eq:an}, where
$T(a,\tau)=\sigma(Q^{-1}(Q(a)+\tau))$.
\begin{exmp}Lasota and Mackey \cite{almcm84} considered a very general cell
cycle model for the evolution of the distribution of `mitogen' at cell birth in
which $g$ was a $C^1$ function on $[0,2l)$,  such that $g(x)>0$ for $x>0$ and
$G(2l)=\infty$, where $l$ is finite or not. Further $\varphi$ was a continuous
function on $[0,2l)$ such that $\varphi (0)= 0$
\[
\liminf_{x\to 2l} \varphi(x)>0\quad \text{when } l <\infty \quad \text{and }
\quad\liminf_{x\to\infty} q(x)
> 0,
\]
where $q(x) = \varphi(x)/g(x)$. In their model $\lambda(x)=2x$. They were able to
show that successive generations had densities evolving under the action of a
stochastic operator
\[
Ku(x) = 2 q(2x) \int_0^{2x} \exp \left [ -\int _y^{2x} q(z) dz \right ] u(y) \,dy
\quad\text{for}\quad 0<x<l
\]
and that $K$ is asymptotically stable.
\end{exmp}
\begin{exmp}
Building on this model Mackey et al.~\cite{mcm-ss-86} took  $g(x) = x(2-x)/b$ and
$ \varphi(x) = S(x-1)\mathbf{1}_{(1,2)}(x)$ with $b,S > 0$ and $x \in (0,2)$ to
fit a number of {\it in vitro} cell cycle data sets. With these choices for $g$
and $\varphi$ it is straightforward to show that the unique stationary density of
mitogen is
    $$
    v_*(x) = Sb \cdot 2^{Sb} (x-\tfrac 12)\left [{x(1-x)} \right ]^{(Sb/2)-1} \quad \mbox{for} \quad x \in (\tfrac 12, 1).
    $$

\end{exmp}

Assume that $\X=(d_0,\infty)$ and $Q(\infty)=\infty$. We can
rewrite the operator $K$ given by \eqref{op:cc} in the form
\begin{equation}\label{op:cc1} Ku(x)=\int _{d_0}^{\lambda (x)} -\dfrac
{\partial}{\partial x}\left (
    e^{Q(y)-Q(\lambda(x))} \right )
    u(y)\, dy.
\end{equation}
Asymptotic properties of this operator have been well studied
\cite{gackilasota90,al-mcm-jt-92,baronlasota93,rudnicki95}.
\begin{prop}[\cite{gackilasota90}]\label{p:blas}
Assume that $\varphi(x)>0$ for $x>d_0$ and $Q(d_0)=0$. If
\begin{equation}\label{eq:blas}
\liminf_{x\to \infty}\bigl( Q(\lambda(x))-Q(x)\bigr)>1
\end{equation}
then $K$ as defined in \eqref{op:cc1} has a strictly positive invariant density
and if $Q(\lambda(x))-Q(x)\le 1$ for all $x>d_0$ it has no invariant density.
\end{prop}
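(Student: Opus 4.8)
The plan is to use the probabilistic description of $K$ recorded after \eqref{op:cc}: $K$ is the transition operator of the chain $a_{n+1}=T(a_n,\tau_n)$ from \eqref{eq:an} with $T(a,\tau)=\sigma(Q^{-1}(Q(a)+\tau))$, where the $\tau_n$ are i.i.d.\ exponential of unit mean (the factor $e^{Q(y)-Q(\lambda(x))}$ in \eqref{op:cc} is the survival probability $e^{-s}$ of the physiological waiting time). First I would pass to the additive coordinate $\xi=Q(x)$. Because $\varphi>0$ on $\X$, the map $Q$ is a strictly increasing $C^1$ diffeomorphism of $\X=(d_0,\infty)$ onto $(0,\infty)$ (here $Q(d_0)=0$ and $Q(\infty)=\infty$ are used), hence nonsingular, so $v_*$ is invariant for $K$ iff its image $\tilde v_*$ under $\xi=Q(x)$ is stationary for the transformed chain $\xi_{n+1}=S(\xi_n+\tau_n)$, where $S=Q\circ\sigma\circ Q^{-1}$ is increasing and the reset drop satisfies $S^{-1}(\xi)-\xi=Q(\lambda(x))-Q(x)=:D(x)$ with $x=Q^{-1}(\xi)$. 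Between resets the coordinate grows by a mean-one exponential amount and each reset lowers it by $D$; thus the threshold $1$ in \eqref{eq:blas} is precisely $\mathbb{E}[\tau_n]$, and the problem reduces to a drift (recurrence) analysis of this chain.

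For existence under \eqref{eq:blas} I would run a Foster--Lyapunov argument with $V(\xi)=\xi$. A short computation gives the one-step drift
\[
\mathbb{E}[\xi_{n+1}\mid\xi_n=\xi]-\xi=1+\int_0^\infty\bigl(S(\xi+\tau)-(\xi+\tau)\bigr)e^{-\tau}\,d\tau .
\]
Since $S(\zeta)-\zeta=-D\le-(1+\delta)$ for all large $\zeta$ and some $\delta>0$ (this is \eqref{eq:blas}), the drift is $\le-\delta$ outside a compact set, while it stays finite on it. The one-step law of $S(\xi+\tau)$ has a locally positive density (again because $\varphi>0$), so the chain is irreducible and compact sets are petite; the Foster criterion then yields positive Harris recurrence and a unique stationary probability measure, which is absolutely continuous because $K$ is an integral operator, giving the density $v_*$. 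Strict positivity follows directly from $v_*=Kv_*$: by \eqref{op:cc}, $Kv_*(x)>0$ as soon as $\int_{d_0}^{\lambda(x)}e^{Q(y)}v_*(y)\,dy>0$, so if $v_*>0$ on $(c,\infty)$ then $v_*>0$ on $(\sigma(c),\infty)$; iterating and using $\sigma^n(c)\downarrow d_0$ gives $v_*>0$ a.e.

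For the converse, assume $D\le1$ on $\X$ and that an invariant density $v_*$ exists. Testing $v_*=Kv_*$ against the function $Q$ (equivalently, integrating the drift identity above against $\tilde v_*$) gives $\int(\mathbb{E}[\xi_{n+1}\mid\xi_n=\xi]-\xi)\,\tilde v_*(\xi)\,d\xi=0$. But $S(\zeta)-\zeta=-D\ge-1$, so the integrand is nonnegative pointwise; hence it vanishes $\tilde v_*$-a.e., which forces $S(\xi+\tau)-(\xi+\tau)=-1$ for a.e.\ $\tau$ and $\tilde v_*$-a.e.\ $\xi$, i.e.\ $D\equiv1$ on the support of $v_*$. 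By the same iteration as above $v_*>0$ a.e.\ on $(d_0,\infty)$, whereas $D(x)\to0$ as $x\downarrow d_0$ (since $\lambda(x),x\to d_0$ and $Q(d_0)=0$). This contradiction shows no invariant density exists.

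I expect the main difficulty to lie in the borderline, null-recurrent regime and in the behaviour at the boundary $d_0$: one must verify rigorously that the kernel is irreducible with petite compact sets, that the stationary measure has full support $(d_0,\infty)$, and rule out the degenerate case $D\equiv1$ (the zero-drift walk $\xi_{n+1}=\xi_n+\tau_n-1$) through the boundary value $D(d_0^+)=0$ rather than by a drift estimate; possible fixed points of $\sigma$ in $\X$ would also have to be excluded. A purely analytic alternative is to rewrite $v_*=Kv_*$, for $F(x)=\int_{d_0}^x e^{Q(y)}v_*(y)\,dy$, as the advanced-argument equation
\[
F'(x)=(Q\circ\lambda)'(x)\,e^{Q(x)-Q(\lambda(x))}F(\lambda(x)),
\]
and to analyse its integrable solutions, as in \cite{gackilasota90}.
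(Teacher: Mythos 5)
The paper itself does not prove Proposition~\ref{p:blas} --- it is quoted from \cite{gackilasota90}, where the argument is analytic (integrable solutions of the advanced-argument equation you write at the end). Your probabilistic route through the chain $\xi_{n+1}=S(\xi_n+\tau_n)$, $\xi=Q(x)$, is legitimate and is licensed by the paper's own remark after \eqref{op:cc} identifying $K$ with the transition operator of \eqref{eq:an}. Your \emph{existence} half is essentially sound: the drift identity is correct, and under \eqref{eq:blas} the drift of $V(\xi)=\xi$ is $\le-\delta$ off a bounded set. Two repairs are needed, both fixable: (i) the exceptional set is of the form $(0,M]$, which is \emph{not} compact in the state space $(0,\infty)$, so ``compact sets are petite'' does not cover it; you must check directly that $(0,M]$ is small, which follows from the explicit one-step density $p(\xi,\eta)=e^{\xi}e^{-S^{-1}(\eta)}(S^{-1})'(\eta)\mathbf{1}_{\{S^{-1}(\eta)>\xi\}}\ge e^{-S^{-1}(\eta)}(S^{-1})'(\eta)\mathbf{1}_{\{S^{-1}(\eta)>M\}}$ --- and this is precisely where $Q(d_0)=0$ (the state space being bounded below) enters; (ii) $\lambda(x)>x$ on all of $(d_0,\infty)$ (no fixed points of $\sigma$) must be assumed, as you note, both for irreducibility and for $\sigma^n(c)\downarrow d_0$.

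The \emph{converse} has a genuine gap. The identity $\int_0^\infty\bigl(\mathbb{E}[\xi_{1}\mid\xi_0=\xi]-\xi\bigr)\tilde v_*(\xi)\,d\xi=0$ is not a consequence of stationarity alone: stationarity gives $\int\mathbb{E}[\xi_1\mid\xi_0=\xi]\,\tilde v_*\,d\xi=\int\xi\,\tilde v_*\,d\xi$ as elements of $[0,\infty]$, and you may subtract only if $\int\xi\,\tilde v_*(\xi)\,d\xi=\int Q(x)v_*(x)\,dx<\infty$. Nothing gives you this finite first moment; in the critical regime $D\le1$ heavy-tailed stationary laws are exactly what one should expect. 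And the inference genuinely fails without it: a chain can be stationary with pointwise nonnegative (even infinite) drift --- e.g.\ resampling $\xi_{n+1}$ independently from a Pareto law of infinite mean --- so ``nonnegative integrand with zero integral'' is unjustified, and everything after it, including the use of $D(d_0^+)=0$, collapses. The clean fix stays inside your framework but replaces the test function by pathwise domination: $D\le1$ means $S(\zeta)\ge\zeta-1$ for $\zeta>1$, and trivially $S(\zeta)>0\ge\zeta-1$ for $\zeta\le1$; hence, driving both chains by the same $\tau_i$, induction gives $\xi_n\ge W_n$, where $W_0=0$ and $W_{n+1}=\max(W_n+\tau_n-1,0)$ is the critical Lindley recursion. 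By the standard duality, $W_n$ has the law of $\max_{0\le k\le n}\sum_{i=1}^{k}(\tau_i-1)$, which tends to $+\infty$ a.s.\ since a nondegenerate mean-zero random walk has $\limsup=+\infty$. So if $\tilde v_*$ were stationary, then for every $M$, $\int_M^\infty\tilde v_*\ge P(W_n>M)\to1$, which is absurd. This argument needs no moment hypotheses, covers the degenerate case $D\equiv1$ without invoking boundary behaviour, and uses $Q(d_0)=0$ (positivity of $\xi$) exactly where Remark~\ref{r:blas} shows it is indispensable.
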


\begin{rem}\label{r:blas}
The assumption $Q(d_0)=0$ can not be omitted in Proposition~\ref{p:blas}, as the
following example shows.

Let $\X=(0,\infty)$, $\lambda(x)=2x$, and $Q(x)=b\log x$. The operator $K $ is now
\[
K u(x)=\frac{b}{2^b x^{b+1}}\int_{0}^{2x}y^b u(y)\,dy.
\]
If we take $u(x)=1/x$ then $Ku(x)=u(x)$ for all $b>0$. This shows that $K $ has a
subinvariant function which is strictly positive and 
not integrable. Since $K $ overlaps supports, it has no invariant density
\cite[Remark 6]{rudnicki95} for any $b> 0$, but condition \eqref{eq:blas} holds
whenever $b\log 2>1$.
\end{rem}


\begin{exmp}\label{ex:l1}
Consider the following functions
\[
g(x)=k,\quad k>0, \quad  \varphi(x)=px^\alpha,\quad p>0, \alpha> -1, \quad
\lambda(x)=2x, \quad x>0.
\]
 We have $G(x)=kx$ and $Q(x)=bx^{\alpha+1}/(\alpha+1)$, where $b=p/k$. The
 operator~$K$ has a strictly positive invariant density, by Proposition~\ref{p:blas}.
 Thus the~semigroup $\{P(t)\}_{t\ge 0}$ is
 stochastic, by Theorem~\ref{thm:uniqa}, and it is partially integral, by Proposition~\ref{p:pichrud},
 since $g(x/2)\neq g(x)/2$ for all $x$.
The domain of the operator $A$ is
\[
\mathcal{D}(A)=\{u\in L^1[0,\infty):u(0)=0, u\in \mathrm{AC}, u'\in L^1,
\int_{0}^{\infty}x^\alpha |u(x)|dx<\infty\}.
\]
The stationary density $u_*\in\mathcal{D}(A)$ for the semigroup $\{P(t)\}_{t\ge
0}$ is a solution of the equation
\[
u'(x)=-bx^\alpha u(x)+2b(2x)^\alpha u(2x),\quad x>0, 
\]
and is given by
\[
u_*(x)=\sum_{n=0}^\infty c_n e^{-Q(2^nx)},\quad\text{where}\quad
c_n=\frac{2^{\alpha+1}}{1-2^{n(\alpha+1)}}c_{n-1},\quad n\ge1,
\]
and $c_0$ is a normalizing constant.  The stationary density $v_*$ for the
operator $K$ is given by $v_*(x)=cx^\alpha u_*(2x)$, where $c$ is a normalizing
constant.
\end{exmp}

We continue with the above example, but now we take $\alpha=-1$ and show that for
certain values of the parameter $b$ the semigroup is stochastic and for others it
is not. Observe that we have $Q(x)=b\log x$, thus the operator $K$ is the operator
from Remark~\ref{r:blas} and it is not asymptotically stable.

\begin{exmp}\label{ex:l2} Let the functions $g$ and $\lambda$ be as in Example~\ref{ex:l1}. Let
$Q(x)=b\log x$, where $b=p/k$. We have
\[
P(\varphi R_1) u(x)=\frac{b e^{-2 k x}}{2^b x^{b+1}}\int_{0}^{2x}e^{ k y}y^b
u(y)dy.
\]
If we take $u(x)=x^{\beta-1}e^{- k x}$ then $u\in L^1$ for $\beta>0$ and
\[
P(\varphi R_1) u(x)=\frac{b 2^\beta}{b+\beta}u(x)e^{- k x}. 
\]

Assume that $b\log 2<1$. Since we can find $\beta>0$ such that $b 2^\beta\le
b+\beta$, the operator $P(\varphi R_1)$ has a subinvariant strictly positive
density, which shows that the semigroup $\{P(t)\}_{t\ge 0}$ is stochastic, by
Theorem~\ref{thm:uniq}.

Assume now that $b\log 2>1$ and take $k=1$. If we go back to \eqref{eq:an} then
\[
a_n=\frac{1}{2}a_{n-1}e^{\tau_n/b}\quad \text{and}\quad
\mathbb{E}(a_n^\gamma)=\frac{\mathbb{E}(a_0^\gamma)}{2^{n\gamma}}\Bigl(\frac{b}{b-\gamma}\Bigr)^n.
\]
We can find $\gamma<1$ such that $b 2^{-\gamma}<b-\gamma$. Since $
t_n=a_n-a_0+\sum_{i=1}^n a_i$, this shows that
$\sup_{n}\mathbb{E}(t_n^\gamma)<\infty$, so that the process is defined only up to
a finite random time and $\{P(t)\}_{t\ge 0}$ can not be stochastic.
\end{exmp}

\begin{exmp} Tyson and Hannsgen \cite{tyson86} in their cell cycle model
 consider a special case of the model of \citep{almcm84} in which they let
$\X=(\sigma,\infty)$, where $\sigma<1$ and consider the following functions
\[
g(x)=kx,\quad \varphi(x)=\left\{
                           \begin{array}{ll}
                             0, & x<1 \\
                             p, & x\ge 1
                           \end{array}
                         \right.
,\quad \sigma(x)=\sigma x.
\]
They show that the unique steady state $v_*$ is given by
\[
v_*(x) = \dfrac {r-1}{\sigma} \left( \dfrac x \sigma \right )^{-r},
\]
where the exponent $r>1$ must satisfy
\[
b-(r-1) = b \sigma ^{r-1} \quad\text{and}\quad b \ln \frac{1}{\sigma}>1, \quad
\text{where }b=\frac{p}{k}.
\]
In this example we have $g(\sigma x)=\sigma g(x)$ for all $x$ and the semigroup
$\{P(t)\}_{t\ge 0}$ is not partially integral. Although it has a unique strictly
positive stationary density, it is not asymptotically stable due to a possible
synchronization \cite{diekmann84}.

\end{exmp}



We conclude this section with an example when the reset function $\sigma$ depends
additionally on an external variable.   
Let $\X=(0,\infty)$, $\Theta=(0,1)$, $\nu$ be a measure on $\Theta$ with a density
$h$, and the reset function $\sigma$ be of the form  $ \sigma(x,\theta)=x-\theta x
$. Then the transition operator $P$ has the kernel $p$
\[
p(x,y)=1_{(0,x)}(y)\psi\Bigl(\frac{x}{y}\Bigr)\frac{1}{y}, \quad 
\text{where } \psi(\theta)=h(1-\theta),
\]
and the evolution equation is
$$
\dfrac {\partial u (t,x)}{\partial t}=-\dfrac {\partial g(x) u(t,x) }{\partial
x}-\varphi(x) u(t,x)+ \int_{x}^\infty
\psi\Bigl(\frac{x}{y}\Bigr)\frac{\varphi(y)}{y}u(t,y)dy.
$$
The operator $K$ has the kernel
\begin{equation}\label{e:k1}
k(x,y)=\int_{\max\{x,y\}}^\infty\psi\Bigl(\frac{x}{z}\Bigr)\frac{\varphi(z)}{z
g(z)}e^{Q(y)-Q(z)}dz, \quad x,y\in (0,\infty).
\end{equation}

\begin{exmp}
Suppose that  $\varphi(x)/g(x)=b x^\alpha$ for all $x>0$, where $b>0$,
$\alpha>-1$. We have $ Q(x)=b x^{\alpha+1}/(\alpha+1)$. We provide the form of the
invariant density for $K$ when $\psi(z)=\beta z^{\beta-1}$ for $z\in[0,1]$ and
$\beta>0$. It is easily seen that the invariant density for the operator $K$ is of
the form
\[
v_*(x)=\frac{b^{\gamma}}{(\alpha+1)^{1+\gamma}\Gamma(\gamma)}x^{\beta-1}e^{-Q(x)},\quad
\gamma=\frac{\beta}{\alpha +1}.
\]We have
\[
R_0v_*(x)=\frac{xv_*(x)}{\beta g(x)}.
\]
If $R_0v_*\in L^1$ then $\{P(t)\}$ is an asymptotically stable stochastic
semigroup by Theorem~\ref{thm:uniqa}. For example when $\alpha=0$ then $v_*$ is
the gamma distribution, while if $\alpha=\beta=1$ then
\[
v_*(x)=\frac{\sqrt{b}}{2\sqrt{2\pi}}e^{-b x^2/2}.
\]
\end{exmp}

\section{Decay instead of growth}\label{sec:other}

We assumed in Sections~\ref{sec:St} and~\ref{sec:Pt} that
$g(x)>0$ for $x\in(d_0,d_1)$, and illustrated the applicability
of our results to concrete situations in Section
\ref{sec:exp-growth}.  In this section we discuss a situation
when instead of  growth there is degradation, so now we suppose
that $g(x)<0$ for $x\in(d_0,d_1)$. Results analogous to those
of Sections~\ref{sec:St} and~\ref{sec:Pt}
with similar proofs are valid in this case, and we illustrate the applicability of these
to models for the stochastic regulation of gene expression. 

Let the functions $G$, $Q$ be defined as in~\eqref{eq:GQ}.
Observe that now $G$ is decreasing and $Q$ is nonincreasing. If
$G(d_0)=+\infty$ then $\pi_t(\X)\subseteq \X$ for all $t\ge 0$
and $t\mapsto\pi_tx$ is decreasing. Let $\{P_0(t)\}_{t\ge 0}$
and $\{S(t)\}_{t\ge 0}$ be as in~\eqref{eq:dpo}
and~\eqref{eq:dst}. Thus, if $g<0$ and $G(d_0)=+\infty$ the
conclusions of Theorem~\ref{thm:sP0t} remains valid. The
analogue of Theorem~\ref{thm:ASt} with the same method of proof
reads as follows.

%
\begin{thm}
If $g<0$ and $G(d_0)=+\infty$ then the operator $A_0$ defined on the domain
\begin{equation}\label{d:dA0n}
\mathcal{D}(A_0)=\{u\in \SX: gu\in \ACloc,\; A_0 u\in \SX, \;\lim_{x\uparrow
d_1}g(x)u(x)=0\}
\end{equation}
is the generator of the semigroup $\{P_0(t)\}_{t\ge 0}$ and the operator $A$
defined on $\mathcal{D}(A)=\mathcal{D}(A_0)\cap \SXP$  is the generator of
$\{S(t)\}_{t\ge 0}$.

Moreover, the resolvent of the operator $A$ is of the form
\[
R(\lambda,A)v(x)=
\int_{x}^{d_1}\frac{1}{|g(x)|}e^{Q_\lambda(y)-Q_\lambda(x)}v(y)\;dy,\quad v\in
\SX.
\]
\end{thm}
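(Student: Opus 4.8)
The plan is to reproduce the proof of Theorem~\ref{thm:ASt} essentially verbatim, interchanging the roles of the two endpoints $d_0$ and $d_1$ and letting the hypothesis $G(d_0)=+\infty$ play the role that $G(d_1)=+\infty$ played in the growth case. With $g<0$ the function $G$ of \eqref{eq:GQ} is strictly \emph{decreasing}, $Q$ is nonincreasing, and hence $Q_\lambda=\lambda G+Q$ is strictly decreasing; in particular $G(d_0)=+\infty$ forces $Q_\lambda(d_0)=+\infty$. The inflow boundary is now $d_1$, since the backward flow $\pi_{-s}$ runs \emph{toward} $d_1$, which is exactly why the boundary condition is imposed at $d_1$ in \eqref{d:dA0n} and why the resolvent integrates from $x$ up to $d_1$.

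First I would establish the analogue of Lemma~\ref{l:Rlam}. Write $\widetilde R_\lambda$ for the operator in the displayed resolvent formula, i.e. the integral operator with kernel $\widetilde r_\lambda(x,y)=\mathbf 1_{(x,d_1)}(y)\,e^{Q_\lambda(y)-Q_\lambda(x)}/|g(x)|$, so that $g(x)\widetilde R_\lambda v(x)=-e^{-Q_\lambda(x)}\int_x^{d_1}e^{Q_\lambda(y)}v(y)\,dy$. For the contraction bound I would fix $y$ and integrate the kernel in $x$ over $(d_0,y)$: since $Q(y)-Q(x)\le0$ and $G(y)-G(x)<0$ for $x<y$, one has $e^{Q_\lambda(y)-Q_\lambda(x)}\le e^{\lambda(G(y)-G(x))}$, and because $\tfrac{\lambda}{|g(x)|}e^{-\lambda G(x)}=\tfrac{d}{dx}e^{-\lambda G(x)}$ the integral telescopes to $1-e^{\lambda(G(y)-G(d_0))}=1$ by $G(d_0)=+\infty$; this yields $\lambda\|\widetilde R_\lambda v\|\le\|v\|$. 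That $g\widetilde R_\lambda v\in\ACloc$ and $u=\widetilde R_\lambda v$ solves \eqref{eq:resA} is read off directly from the integrating-factor identity $\tfrac{d}{dx}(e^{Q_\lambda}gu)=e^{Q_\lambda}v$, the local integrability of $e^{Q_\lambda}v$ near $d_1$ following from the monotonicity of $Q_\lambda$.

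Next I would prove the analogue of Lemma~\ref{l:Rlam2}. Since $|g(x)\widetilde r_\lambda(x,y)|=\mathbf 1_{(x,d_1)}(y)\,e^{Q_\lambda(y)-Q_\lambda(x)}$ and $Q_\lambda(d_0)=+\infty$, dominated convergence gives $l_0(\widetilde R_\lambda v)=l_1(\widetilde R_\lambda v)=0$, and, exactly as before, $\widetilde R_\lambda v\in\SXP$ and $A_0(\widetilde R_\lambda v)=(\lambda+\varphi)\widetilde R_\lambda v-v\in\SX$, so $\widetilde R_\lambda(\SX)\subseteq\mathcal D(A)$. For injectivity of $\lambda-A$ I would solve the homogeneous equation $Aw=\lambda w$, whose solutions are $w(x)=c\,e^{-Q_\lambda(x)}/g(x)$; the case split is now carried out at $d_1$ rather than $d_0$. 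Because $(\lambda+\varphi)w=-c\,\tfrac{d}{dx}e^{-Q_\lambda}$ integrates to $|c|\,e^{-Q_\lambda(d_1)}$ over $\X$ (using $e^{-Q_\lambda(d_0)}=0$), non-integrability of $A_0w$ when $Q_\lambda(d_1)=-\infty$ forces $c=0$, while when $Q_\lambda(d_1)$ is finite the domain condition $l_1(w)=c\,e^{-Q_\lambda(d_1)}=0$ forces $c=0$. Hence $\widetilde R_\lambda=(\lambda-A)^{-1}$ and $\SX=\rg(\lambda-A)$.

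Finally, the generation statement follows as in Theorem~\ref{thm:ASt}: the dissipativity $\|\lambda u-Au\|\ge\lambda\|u\|$ together with positivity of $\lambda\widetilde R_\lambda$ and the Hille-Yosida theorem show that $A$ (and, on taking $\varphi\equiv0$, $A_0$ on \eqref{d:dA0n}) generates a positive contraction semigroup with $R(\lambda,A)=\widetilde R_\lambda$, and one identifies this generator with the generator $\mathcal A$ of $\{S(t)\}_{t\ge0}$ by checking $R(\lambda,\mathcal A)v=\widetilde R_\lambda v$ on the dense set $C_c(\X)$ and invoking that two positive contractions agreeing there coincide. The step I expect to require the most care is this last resolvent identification, where $\int_0^t e^{-\lambda s}S(s)v\,ds$ is evaluated by a change of variables: here $s_*(x)=\sup\{s>0:\pi_{-s}x\in\X\}$ now satisfies $\pi_{-s_*(x)}x=d_1$, since the backward flow runs \emph{toward} $d_1$, so the cutoff term involves the integral over $(\pi_{-t}x,d_1)$ and vanishes as $t\to\infty$ precisely because $v\in C_c(\X)$ is supported away from $d_1$. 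Getting the orientation of this boundary bookkeeping right, rather than any genuinely new estimate, is the only real obstacle.
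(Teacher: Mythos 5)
Your proposal is correct and takes essentially the same approach as the paper: the paper's own ``proof'' of this theorem is literally the remark that it follows ``with the same method of proof'' as Theorem~\ref{thm:ASt}, i.e., by mirroring Lemmas~\ref{l:Rlam} and~\ref{l:Rlam2} and the generator identification with the roles of $d_0$ and $d_1$ interchanged, which is exactly what you carry out. Your boundary bookkeeping is the right mirrored version --- $Q_\lambda(d_0)=+\infty$ from $G(d_0)=+\infty$, the case split at $d_1$ (non-integrability when $Q_\lambda(d_1)=-\infty$, the condition $l_1(w)=0$ when it is finite) for the homogeneous solution, and $\pi_{-s_*(x)}x=d_1$ in the Laplace-transform computation of the resolvent.
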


The operator $R_0$ is now defined with the help of the function
\[
r_0(x,y)=\mathbf{1}_{(x,d_1)}(y)\frac{1}{|g(x)|} e^{Q(y)-Q(x)}
\]
and the assertions of Theorem~\ref{thm:uniqa} remain valid under the assumption
that $g<0$ and $G(d_0)=Q(d_0)=+\infty$.

In particular, if the operator $P$ has kernel $p$ then the operator $K$ has the
kernel $k$
\[
k(x,y)=\int_{d_0}^y p(x,z)\frac{\varphi(z)}{|g(z)|}e^{Q(y)-Q(z)}\,dz
\]
and is stochastic if and only if $Q(d_0)=\infty$.

Let $\X=\Theta=(0,\infty)$ and  $\sigma(x,\theta)=x+\theta$. 
Then the operator $P$ is the convolution operator with the measure $\nu $, i.e. if
$\zeta_1$ has density $u$ and $\theta_1$ has distribution $\nu $ then $P u$ is the
density of $\zeta_1+\eta_1$. Assume that $\nu$ has a density $h$. Then
\[
p(x,y)=\mathbf{1}_{(0,x)}(y) h(x-y),
\]
so that our evolution equation is
$$
\dfrac {\partial u (t,x)}{\partial t}=-\dfrac {\partial g(x) u(t,x) }{\partial
x}-\varphi(x) u(t,x)+ \int_{0}^xh(x-y)\varphi(y)u(t,y)dy.
$$
\begin{exmp} Friedman et al.~\cite{friedman06} have considered stochastic
aspects of gene expression following from bursts of protein production, and their
formulations are special cases of our results. Identifying their  $w$ and $c$ with
$w(x-y)dy=h(x-y)dy - \delta_{x} (dy)$ and $\varphi(x)=k_1 c(x)$
and taking $g(x)=-\gamma x$ then our equation 
becomes identical, in a steady state situation to their Equation 6
$$
-\dfrac {\partial \gamma x u_*(x) }{\partial x} = \int_{0}^x
\varphi(y)u_*(y)w(x-y) dy.
$$
Then, following \cite{friedman06}, let $h(y) =\frac1b e^{- y/b}$ be the
exponential distribution and let, in their first model,  $\varphi(x)=k_1$. Then
the equation, as they have shown, has as a solution the density of the gamma
distribution. In considering the second  model of \cite{friedman06} that treated
transcription factor regulation of its own transcription, we are let to consider
the following function
\[
\varphi(x)=k_1\frac{1}{1+x^\alpha}+k_1\epsilon.
\]
As they have shown the corresponding density is given by
$$
u_*(x) = c x^{a(1+\epsilon)-1} e^{-x/b} \left [ \frac{1}{1+x^\alpha}\right
]^{a/\alpha},
$$
where $c$ is a normalizing factor.

\end{exmp}

\section*{Acknowledgments} This work was supported by the Natural
Sciences and Engineering Research Council (NSERC, Canada), by the Mathematics of
Information Technology and Complex Systems (MITACS, Canada), and by the State
Committee for Scientific Research (Poland) Grant N N201 0211 33 (MT-K). This
research was partially carried out while MCM was visiting the Institut f{\"ur}
theoretische Physik, Universit{\"a}t Bremen, and while MT-K was visiting McGill
University.

%

\end{document}